\documentclass[11pt]{article}

\usepackage[margin=1in]{geometry}
\usepackage[T1]{fontenc}
\usepackage[utf8]{inputenc}
\usepackage{amsmath,amssymb,amsthm,mathtools}
\usepackage{enumitem}
\usepackage{microtype}
\usepackage{hyperref}
\usepackage{mathrsfs}

\hypersetup{
  colorlinks=true,
  linkcolor=blue,
  citecolor=blue,
  urlcolor=blue
}

\newtheorem{theorem}{Theorem}[section]
\newtheorem{lemma}[theorem]{Lemma}
\newtheorem{proposition}[theorem]{Proposition}
\newtheorem{corollary}[theorem]{Corollary}
\theoremstyle{definition}
\newtheorem{definition}[theorem]{Definition}
\theoremstyle{remark}

\newtheorem{claim}[theorem]{Claim}

\newcommand{\E}{\mathbb E}
\newcommand{\R}{\mathbb R}
\newcommand{\Z}{\mathbb Z}
\newcommand{\F}{\mathbb F}
\newcommand{\1}{\mathbf 1}
\newcommand{\homn}{\operatorname{hom}}

\title{The sparse regularity method for graphs with few even-cycle homomorphisms}
\author{
Nan Zhang\thanks{Corresponding author. Center for Discrete Mathematics, Fuzhou University,
Fuzhou, 350108, P. R. China. Email: \texttt{1989378671@qq.com}.}
}
\date{}

\begin{document}
\maketitle

\begin{abstract}
Let $C_k$ denote a cycle on $k$ vertices. Pascadi (2026) established a removal lemma for
$C_{2k+1}$ in graphs with few $C_{2k}$-homomorphisms under a dense-pairs condition, using
Schatten--von Neumann norms and entropy methods. We give a short proof of a version with
more restrictive partition and diagonal requirements, extending the combinatorial approach
of Conlon, Fox, Sudakov and Zhao (2021) without using Pascadi's Schatten-norm regularity
framework. We illustrate the method with a known sparse removal statement for odd product
equations in finite groups.

\medskip
\noindent\textbf{Keywords:} Weak sparse regularity lemma; Removal lemma; Homomorphism density
\end{abstract}

\section{Introduction}\label{sec:introduction}

Szemer\'edi's regularity lemma \cite{SzemReg} was a key ingredient in his proof \cite{SzemAP}
of the famous conjecture of Erd\H{o}s and Tur\'an \cite{ET} that every sequence of integers with
positive upper density contains arbitrarily long arithmetic progressions. Roughly speaking, the
lemma states that the vertex set of any graph can be partitioned into a small number of parts such
that almost every pair of parts induces a pseudorandom (or random-looking) subgraph. It has
proven to be a powerful tool in extremal graph theory.

As a notable application of the regularity lemma, Ruzsa and Szemer\'edi \cite{RSz} established
the following well-known triangle removal lemma, which itself implies Roth's theorem \cite{Roth}
on 3-term arithmetic progressions.

\begin{theorem}[Ruzsa and Szemer\'edi \cite{RSz}]\label{thm:dense-triangle-removal}
For every $\epsilon>0$, there exists $\delta>0$ such that any $n$-vertex graph $G$ with fewer
than $\delta n^3$ triangles can be made triangle-free by removing at most $\epsilon n^2$ edges.
\end{theorem}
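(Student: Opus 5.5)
We argue by contradiction, combining Szemer\'edi's regularity lemma \cite{SzemReg} with a counting lemma. Fix $\epsilon>0$ and an $n$-vertex graph $G$. Apply the regularity lemma with regularity parameter $\eta=\eta(\epsilon)$, to be chosen small, and with lower bound $m_0=\lceil 1/\eta\rceil$ on the number of parts. This yields an equitable partition $V(G)=V_1\cup\dots\cup V_M$ with $m_0\le M\le M_0(\eta)$, in which all but at most $\eta M^2$ pairs $(V_i,V_j)$ are $\eta$-regular. Then we delete edges in three cleaning steps:
\begin{enumerate}[label=(\roman*)]
\item all edges inside a single part $V_i$, at most $M\cdot (n/M)^2=n^2/M\le \eta n^2$ edges;
\item all edges between irregular pairs, at most $\eta M^2 (n/M)^2=\eta n^2$ edges;
\item all edges between pairs of density less than $\epsilon/4$, at most $(\epsilon/4)\binom{M}{2}(n/M)^2\le \epsilon n^2/8$ edges.
\end{enumerate}
With $\eta\le\epsilon/8$, the total removed is below $\epsilon n^2$. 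Call the remaining graph $G'$.

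Next, suppose $G'$ still contains a triangle, with vertices in $V_i,V_j,V_k$. Step (i) forces $i,j,k$ to be distinct. Steps (ii) and (iii) force each of the three pairs among them to be $\eta$-regular with density at least $\epsilon/4$ in $G$; the density is measured in $G$, and the pairs surviving (i)--(iii) are untouched, so $G$ and $G'$ agree on them.

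Then apply the triangle counting lemma. If $(V_i,V_j)$, $(V_j,V_k)$, $(V_i,V_k)$ are $\eta$-regular with densities $d_{ij},d_{jk},d_{ik}\ge \epsilon/4$, and $\eta\le \epsilon/8$, then $G$ has at least $(1-2\eta)(d_{ij}-\eta)(d_{ik}-\eta)d_{jk}\cdots$ triangles across them. In crude form it has at least $(\epsilon/8)^3 |V_i||V_j||V_k|/2$ triangles. The proof has three steps:
\begin{enumerate}[label=(\alph*)]
\item By regularity, all but at most $2\eta|V_i|$ vertices $x\in V_i$ have at least $(d_{ij}-\eta)|V_j|$ neighbours in $V_j$ and at least $(d_{ik}-\eta)|V_k|$ neighbours in $V_k$.
\item For such $x$, both neighbourhoods have size at least $\eta$ times their part, since $d-\eta\ge\epsilon/8\ge\eta$. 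Regularity of $(V_j,V_k)$ then gives at least $(d_{jk}-\eta)(d_{ij}-\eta)(d_{ik}-\eta)|V_j||V_k|$ edges between them.
\item Summing over good $x$ gives at least $c(\epsilon)\,(n/M)^3$ triangles, with $c(\epsilon)=(\epsilon/8)^3/2$.
\end{enumerate}
Since $M\le M_0(\eta)$ and $|V_i|\ge n/(2M)$, this count is at least $\delta n^3$ for
\[
\delta=\frac{c(\epsilon)}{8M_0(\epsilon/8)^3},
\]
contradicting the hypothesis.

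The main obstacle is getting the counting lemma right, in particular the requirement that the neighbourhoods be large enough for regularity to apply. This is handled by taking $\eta\le\epsilon/8$. Beyond that, everything is bookkeeping: $\delta$ depends only on $\epsilon$ through the tower-type bound $M_0$. Small $n$ (where $n/M$ is not large, or $\delta n^3<1$) is trivial, because then having fewer than $\delta n^3$ triangles means having none.
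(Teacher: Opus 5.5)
Your proof is correct. It is the standard argument: Szemer\'edi's regularity lemma, your three cleaning steps, and the triangle counting lemma. The paper itself gives no proof of this statement and quotes it only as classical background.

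The remaining points are bookkeeping and do not affect correctness:
\begin{itemize}
\item In an equitable partition the parts have size up to $\lceil n/M\rceil\le 2n/M$ for $n\ge M$. Step (ii) should therefore be bounded by $4\eta n^2$ rather than $\eta n^2$. Step (i) stays at most $n^2/M\le\eta n^2$ once you use $\binom{|V_i|}{2}$, so the total is still at most $3\epsilon n^2/4$ with $\eta=\epsilon/8$.
\item You should assume $\epsilon\le 1$; for $\epsilon\ge 1/2$ one simply deletes every edge. This is what guarantees $1-2\eta\ge 1/2$.
\item Your informal display should have $d_{jk}-\eta$ in place of $d_{jk}$, as your step (b) correctly does.
\end{itemize}
Your treatment of small $n$ is valid: your choice $\delta=c(\epsilon)/(8M_0(\epsilon/8)^3)$ gives $\delta n^3<c(\epsilon)<1$ whenever $n<2M_0(\epsilon/8)$.

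The paper's own machinery could not replace your route here. In the proof of Theorem~\ref{thm:pascadi}, the paper uses the Frieze--Kannan-type weak regularity lemma (Theorem~\ref{thm:weak-sparse-reg}) and the cut-norm counting Lemma~\ref{lem:counting-multipartite}. These only show that the reduced kernel $\widetilde g$ has small cycle density. The actual removal is then delegated to the dense weighted removal lemma (Theorem~\ref{thm:weighted-removal}). That lemma already contains the present statement as the case $F=K_3$, $g=\1_G$ on $V(G)$ with uniform measure, up to rescaling $\epsilon$ and $\delta$. So the strong regularity lemma you use, with its tower-type $M_0$ inherited by $\delta$, is precisely the dense input that the paper's sparse method takes as a black box.
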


This concept generalizes to an $H$-removal lemma for any fixed subgraph $H$ in (weighted)
graphs (see \cite{EFR,FoxRemoval,ConlonFoxSurvey}) and to hypergraphs (see \cite{Gowers,Tao}),
with the latter leading to a proof of Szemer\'edi's theorem for $k$-term arithmetic progressions
\cite{SzemAP,TaoErgodic}.

Initially, the regularity lemma was applied primarily to dense graphs (see the survey
\cite{KomlosSimonovits} and related works). Given the prevalence of problems concerning sparse
graphs---where an $n$-vertex graph is sparse if it contains $o(n^2)$ edges---it is natural to
extend the regularity method to this setting. The sparse regularity lemma was first studied by
Kohayakawa and R\"odl \cite{Kohayakawa,KR} (see also \cite{GerkeSteger}), and later Scott
\cite{Scott} removed the need for an upper uniformity condition.

However, both the regularity lemma and the accompanying counting lemma behave quite differently
in sparse graphs compared to dense ones. To overcome these difficulties, one typically requires the
host sparse graph to be well-behaved---such as random or pseudorandom---to ensure a counting
lemma holds. For random graphs, the KLR conjecture \cite{KLR2}, a probabilistic embedding
statement, was proved using hypergraph containers \cite{BMS,SaxtonThomason}; a related counting
lemma for subgraphs of random graphs was established in \cite{KLR}. Related transference
results extend classical theorems such as Tur\'an's theorem and Szemer\'edi's theorem
to sparse random settings \cite{CG16,Schacht}. In the pseudorandom setting, the celebrated
Green--Tao theorem \cite{GreenTao} on arithmetic progressions in the primes is a prime example.
Conlon, Fox and Zhao \cite{CFZGT} later gave a simplified exposition; they also proved a relative
hypergraph removal lemma yielding a relative Szemer\'edi theorem \cite{CFZRelative}.

In analogy with Theorem~\ref{thm:dense-triangle-removal}, it is natural to investigate a removal
lemma for odd cycles $C_{2m+1}$ in sparse graphs with few copies of $C_{2m}$. Recently, motivated
in part by the study of quasi-random graphs \cite{CGW,Thomason1}, Conlon, Fox, Sudakov, and Zhao
\cite{CFSZ21,CFSZ22} developed a sparse regularity method without assuming that the graphs lie in
highly pseudorandom hosts. In particular, they proved the following result.

\begin{theorem}[Conlon, Fox, Sudakov and Zhao \cite{CFSZ21}]\label{thm:cfsz21}
An $n$-vertex graph with $o(n^2)$ copies of $C_4$ and $o(n^{5/2})$ copies of $C_5$ can be made
$\{C_3,C_5\}$-free by removing $o(n^{3/2})$ edges.
\end{theorem}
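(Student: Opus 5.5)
The plan is the sparse regularity approach of Conlon, Fox, Sudakov and Zhao, organised in four steps: a weak sparse regularity lemma driven by the $C_4$ bound, a cleaning step, a counting lemma for $C_3$ and $C_5$ driven by the $C_4$ and $C_5$ bounds, and a final contradiction. Let $G$ have $n$ vertices, $p=n^{-1/2}$ and $A$ its adjacency matrix. The hypotheses say $\hom(C_4,G)=o(n^2)=o(p^4n^4)$ and $\hom(C_5,G)=o(p^5n^5)$; degenerate homomorphisms of $C_4$ contribute only $O(\sum_v d(v)^2)$, which is controlled by the $C_4$ count.

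\emph{Regularity.} The $C_4$ count equals $\mathrm{tr}(A^4)=\sum\lambda_i^4$. This gives a cut-norm energy increment for $A/p$: if a partition $\mathcal P$ has $\|A-A_{\mathcal P}\|_\square\ge \epsilon pn^2$, refining along the witnessing sets raises the $L^2$ energy $\|A_{\mathcal P}\|_2^2/(p^2n^2)$ by $\Omega(\epsilon^2)$. Bounded energy uses that $\|A_{\mathcal P}\|_2^2$ is at most $(\sum$ over pairs $d(X,Y)^2|X||Y|)$, and by Cauchy--Schwarz this is dominated by $\hom(C_4)^{1/2}$, which is $O(pn^2)$ even in $L^2$ normalised terms. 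So after $O(\epsilon^{-2})$ steps we obtain a partition into $M=M(\epsilon)$ parts with $\|A-A_{\mathcal P}\|_\square\le\epsilon pn^2$, where $A_{\mathcal P}$ is the step-function average.

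\emph{Cleaning.} Delete the edges of three kinds of pairs:
\begin{itemize}
\item pairs $(X,Y)$ with density below $\eta p$;
\item edges inside small parts;
\item edges at vertices of degree far above average (few, by the $C_4$ count).
\end{itemize}
This removes $O(\eta+\epsilon)n^{3/2}$ edges. If a $C_3$ or $C_5$ survives in the cleaned graph $G'$, its vertices lie in parts $X_1,\dots,X_k$ with all consecutive pairs of density at least $\eta p$, with $k\in\{3,5\}$.

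\emph{Counting.} The key claim is a counting lemma: for odd $k\in\{3,5\}$,
\[
|\hom(C_k,G')-\hom(C_k,A_{\mathcal P})|\le c(\epsilon)p^kn^k,\qquad c(\epsilon)\to0 .
\]
For $k=3$ this is trivial in the wrong direction because triangles need not be counted at scale $p^3n^3$, so we instead argue via $C_5$. If some triangle $X_1X_2X_3$ of dense pairs exists, then walking $X_1X_2X_3X_1X_2$ gives a closed $5$-walk of dense pairs. The model $A_{\mathcal P}$ then has $\Omega_{\eta,M}(p^5n^5)$ homomorphic $C_5$'s, and similarly for a $C_5$ of dense pairs. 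Transferring this count back to $G'$ contradicts $\hom(C_5,G)=o(p^5n^5)$ once degenerate copies are discounted.

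The transfer is done by a telescoping: replace $A$ by $A_{\mathcal P}$ one edge of $C_5$ at a time. Each term has the form $\langle A-A_{\mathcal P}, B\rangle$, where $B$ is obtained by contracting the other four edge matrices along the path. Bounding each term by Cauchy--Schwarz (or by the $S_4$ Schatten norm) gives
\[
\Bigl|\mathrm{tr}\bigl((A-A_{\mathcal P})\,W_1W_2W_3W_4\bigr)\Bigr|\le \|A-A_{\mathcal P}\|_{S_4}\prod\|W_i\|_{S_4}
\]
(Hölder for five factors with exponents $4$ and $5$ needs care). We need $\|W_i\|_{S_4}^4=O(p^4n^4)$, which is exactly the $C_4$ hypothesis for $A$ and for $A_{\mathcal P}$, the latter by convexity. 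We also need $\|A-A_{\mathcal P}\|$ small in a norm interpolating between cut norm and $S_4$. I would use $\|X\|_{S_5}^5\le\|X\|_{\mathrm{op}}\|X\|_{S_4}^4$, together with the operator norm of $A-A_{\mathcal P}$ restricted to dense-block structure being $O(\epsilon^{c}pn)$. That bound comes from the cut-norm to spectral inequality for matrices with bounded $S_4$ norm: $\|X\|_{\mathrm{op}}^{4}\le$ (cut-norm)$\cdot\|X\|_{S_4}^{\ldots}$ via a second-moment argument.

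\emph{Main obstacle.} The hardest step is this passage from cut-norm smallness to smallness in a Schatten-type norm strong enough for the $C_5$ counting, using only the $C_4$ bound. The first thing to try is the CFSZ trick of bounding $\mathrm{tr}((A-A_{\mathcal P})^2 W W')$ by a $C_4$-density of the difference, via Cauchy--Schwarz applied twice. That $C_4$-density of the difference is bounded by $\mathrm{tr}(A^4)-\mathrm{tr}(A_{\mathcal P}^4)$ plus the cut-norm error, and we make it small by choosing the partition to nearly maximise $\mathrm{tr}(A_{\mathcal P}^4)$, i.e.\ running the energy increment on the $C_4$-functional instead of $L^2$.
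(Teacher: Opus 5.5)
Your endgame has a structural gap. A cut-norm (Frieze--Kannan type) partition only supports a counting lemma with a \emph{global} error $c(\epsilon)p^5n^5$. One surviving closed walk of parts $X_1,\dots,X_5$ with densities at least $\eta p$ guarantees only about $\eta^5\prod_i\mu(X_i)\,p^5n^5$ homomorphisms in the model. A contradiction would need $c(\epsilon)<\eta^5\prod_i\mu(X_i)$. But parts can have measure about $1/M(\epsilon)$ with $M(\epsilon)$ exponential in $\epsilon^{-2}$, while $c(\epsilon)$ is only polynomially small, so a single surviving $C_3$ or $C_5$ contradicts nothing. The missing idea is to use the counting lemma only globally, and this is how the paper obtains the statement (Theorem~\ref{thm:C4-removal} with $k=2$, $p=n^{-1/2}$, via Theorem~\ref{thm:pascadi}). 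It gets $t(C_5,\widetilde g)=O(\delta)$ for the whole reduced kernel $\widetilde g=\widetilde f_{\mathcal P}$, where $\widetilde f=f\1_{f_{\mathcal P}\le L}$. It then applies the dense weighted removal lemma (Theorem~\ref{thm:weighted-removal}) on the quotient space of parts, deleting whole blocks of small total weight. Since $\widetilde f$ and $\widetilde g$ have the same mass on each block and $\widetilde f>0$ forces $\widetilde g>0$, this deletes few edges and leaves no $C_5$-homomorphism, hence no $C_3$ or $C_5$. This requires $\widetilde g$ to be bounded, and that is exactly where the $C_4$ count enters. Edges in pairs of density at least $Lp$ are discarded, and Lemma~\ref{lem:dense-pair} shows there are at most $\epsilon pn^2/4$ of them once $L$ is large. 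Your cleaning removes sparse pairs, small parts and high-degree vertices but never dense pairs. So neither the removal lemma nor the passage from small block weight to few deleted edges is available. (The paper also splits each part into $m$ pieces so that every counted homomorphism is an injective cycle; your ``discounting'' of degenerate copies needs some such device.)

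Both analytic steps also fail as written.

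In the regularity step, Cauchy--Schwarz over blocks gives only $\|f_{\mathcal P}\|_2^2\le|\mathcal P|\,t(C_4,f)^{1/2}$ for $f=p^{-1}\1_G$, and the dependence on $|\mathcal P|$ is genuine. Take $M$ disjoint random-like bipartite graphs between sets of size $n/(2M)$ with density $\theta^{1/4}M^{3/4}p$. They have $O(\theta n^2)$ copies of $C_4$ but energy $\Theta(\sqrt{\theta M})$ on the obvious partition, so your increment has no a priori cap. The paper instead uses Theorem~\ref{thm:weak-sparse-reg}, whose cutoff $\1_{f_{\mathcal P}\le1}$ makes the number of parts depend only on $\E f$.

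In the counting step, consider the kernel $\lambda N\1_{S\times S}$ with $\mu(S)=1/N$. It has bounded degrees and operator and $S_4$ norms equal to $\lambda$, but cut norm $\lambda/N$. So no inequality $\|X\|_{\mathrm{op}}\le F(\|X\|_{S_4})\|X\|_\square^{a}$ with $a>0$ can hold. Your fallback, making $t(C_4,f-f_{\mathcal P})$ small, is impossible at $p=n^{-1/2}$. If $0\le f_{\mathcal P}\le L$, the diagonal of $(f-f_{\mathcal P})\circ(f-f_{\mathcal P})$ is at least $d(x)(1-2L/\sqrt n)$. Hence $t(C_4,f-f_{\mathcal P})\ge(1-o(1))n^{-2}\sum_vd(v)^2\ge(1-o(1))(\E f)^2$ for every such partition. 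The same backtracking term makes your opening claim $\homn(C_4,G)=o(p^4n^4)$ false whenever $e(G)=\Theta(pn^2)$; only $O(p^4n^4)$ holds (Lemma~\ref{lem:hom-C4}).

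The paper's Lemma~\ref{lem:counting-multipartite} needs no Schatten or spectral smallness. It is one-sided and proceeds as follows: it clips high-degree vertices; it propagates cut-norm closeness along the two $k$-edge arcs by Lemma~\ref{lem:bounded-cut}; it truncates the resulting path convolutions using only their $L^2$ bound (the $C_{2k}$ density); and it finishes with the triangle counting Lemma~\ref{lem:triangle-counting}. Nonnegativity guarantees that every truncation errs in the favourable direction.
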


They also provided the following applications:
\begin{itemize}
\item Every $n$-vertex graph with no $5$-cycle can be made triangle-free by deleting
$o(n^{3/2})$ edges.
\item For $r\ge 3$, every $n$-vertex $r$-graph with girth greater than $5$ has
$o(n^{3/2})$ edges.
\item Every subset of $[n]$ without a nontrivial solution to
$x_1+x_2+2x_3=x_4+3x_5$ has size $o(\sqrt n)$.
\end{itemize}

Since a $C_4$-free graph on $n$ vertices has at most $O(n^{3/2})$ edges, a removal lemma in such
graphs is only meaningful if one can remove $o(n^{3/2})$ edges to achieve the desired property
(here, being $C_5$-free).

Theorem~\ref{thm:cfsz21} is essentially best possible. For $pn\to\infty$, the number of $C_m$'s
in $G(n,p)$ is typically of order $p^m n^m$ for each fixed $m$. Moreover, if
$p\gtrsim n^{-1/2}$, a second-moment calculation shows that $G(n,p)$ typically contains
$\Theta(pn^2)$ edge-disjoint triangles and therefore cannot be made triangle-free by removing
$o(pn^2)$ edges. Consequently, the random graph $G(n,p)$ with $p=n^{-1/2}$ demonstrates that
Theorem~\ref{thm:cfsz21} fails if we only assume $O(n^{5/2})$ copies of $C_5$ and $O(n^2)$
copies of $C_4$.

A homomorphism from a graph $F$ to a graph $G$ is a mapping
$\phi:V(F)\to V(G)$ such that $\phi(u)\phi(v)$ is an edge of $G$ whenever $uv$ is an edge of
$F$. Let $\homn(F,G)$ denote the number of homomorphisms from $F$ to $G$.

\begin{definition}[Homomorphism density]\label{def:hom-density-intro}
For two graphs $F$ and $G$, the $F$-density in $G$ is defined to be
\[
t(F,G)=\frac{\homn(F,G)}{|V(G)|^{|V(F)|}}.
\]
\end{definition}

Building on Theorem~\ref{thm:cfsz21}, Pascadi \cite{Pascadi} established a removal lemma for
longer odd cycles under mild pseudorandomness assumptions. His proof relies on a regularity
lemma and counting lemmas that employ Schatten--von Neumann norms to measure uniformity.
We consider the following version.

\begin{theorem}[A version of Pascadi's theorem \cite{Pascadi}]\label{thm:pascadi}
For every $\epsilon,L,C>0$ and integer $k\ge2$, there exist positive integers $n_0,M$ and $\delta>0$ such that the
following holds. If $n>n_0$, $p\ge0$, and $G$ is an $n$-vertex graph satisfying:
\begin{enumerate}[label=(\roman*)]
\item For every partition $V(G)=V_1\cup\cdots\cup V_t$ into at most $M$ parts, at most $\epsilon pn^2/4$ edges lie
between pairs $(V_i,V_j)$, including $i=j$, with
\[
e(V_i,V_j)\ge Lp|V_i||V_j|.
\]
Here $e(X,Y)=\sum_{x\in X,y\in Y}\1_G(x,y)$, so internal edges are counted twice in $e(X,X)$; edges lying in the union of the specified pairs are counted once.
\item
\[
t(C_{2k},G)\le (Cp)^{2k},
\qquad\text{i.e.}\qquad
\homn(C_{2k},G)\le (Cpn)^{2k}.
\]
\item $G$ contains at most $\delta p^{2k+1}n^{2k+1}$ copies of $C_{2k+1}$.
\end{enumerate}
Then $G$ can be made $C_3,C_5,\ldots,C_{2k+1}$-free by removing at most $\epsilon pn^2$
edges.
\end{theorem}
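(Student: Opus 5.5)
We follow the combinatorial approach of Conlon, Fox, Sudakov and Zhao. Throughout, $G$ is viewed as a weighted graph normalized by $p$, that is $g=\1_G/p$. The argument has four steps: a weak regularity partition, a cleaning step, a counting lemma for the reduced weighted graph, and a dense removal lemma on the bounded cluster graph.

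\textbf{Step 1: weak regularity partition.} We first prove a weak (Frieze--Kannan type) sparse regularity lemma adapted to the $C_{2k}$ hypothesis (ii). The energy increment is measured via the cut norm, and the increment is bounded using the $C_{2k}$-density rather than the $L^2$ norm. The relevant inequality is that for any partition $\mathcal P$, the step function $g_{\mathcal P}$ satisfies
\[
t(C_{2k},g_{\mathcal P})\le t(C_{2k},g)\le C^{2k}.
\]
This holds because averaging over parts is a contraction for the Schatten $2k$-norm; we prove it combinatorially by repeated Cauchy--Schwarz along the cycle. Then $t(C_{2k},g_{\mathcal P})$ can serve as the potential, or we run the usual $L^2$ increment on the truncated function $\min(g,L)$, whose density function is bounded by $L$.

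Hypothesis (i) says that pairs with $e(V_i,V_j)\ge Lp|V_i||V_j|$ carry at most $\epsilon pn^2/4$ edges. After iterating at most $M$ times, we obtain a partition $\mathcal P$ into at most $M$ parts such that $\|g-g_{\mathcal P}\|_\square\le\eta$, and all but $\epsilon pn^2/4$ edges lie in pairs of normalized density less than $L$.

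\textbf{Step 2: cleaning.} We delete three groups of edges:
\begin{itemize}
\item all edges in pairs of density at least $Lp$ (at most $\epsilon pn^2/4$ edges by (i));
\item all edges in pairs of density less than $\epsilon p/8$ (at most $\epsilon pn^2/8$ edges);
\item all edges inside small parts.
\end{itemize}
Call the result $G'$. Its reduced weighted graph $R$ on the parts has weights in $[\epsilon/8,L]$ on the surviving pairs.

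\textbf{Step 3: counting lemma (main obstacle).} We must show that if $R$ contains a homomorphic copy of $C_{2j+1}$ for some $j\le k$, then $G$ has at least $\delta p^{2k+1}n^{2k+1}$ copies of $C_{2k+1}$. For $j<k$, we reduce to $C_{2k+1}$ by traversing one edge of the cycle back and forth, which yields a closed walk of length $2k+1$ in $R$.

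The counting lemma must give
\[
\bigl|t(C_{2k+1},g')-t(C_{2k+1},g'_{\mathcal P})\bigr|\lesssim \eta^{c}\,C^{O(k)}.
\]
The standard telescoping argument replaces one edge at a time. Each difference term is bounded by
\[
\|g'-g'_{\mathcal P}\|_\square^{\,a}\cdot(\text{$C_{2k}$-type norms})^{\,b}.
\]
We obtain this via the inequality $|t(C_{2k+1};f_1,\dots,f_{2k+1})|\le \|f_1\|_{\square}^{1/(2k)}\prod_i \|f_i\|_{C_{2k}}$-style bounds. These follow by applying Cauchy--Schwarz $\log_2$ times to double the path, as in CFSZ's proof for $C_5$ using $C_4$.

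This is the delicate step. The key point is that the path $P_{2k}$ between two vertices can be Cauchy--Schwarzed into $C_{2k}$-densities of $g'$, which are at most $C^{2k}$ by (ii), monotonicity, and cleaning. The remaining piece is controlled by the cut norm. In this step I would first try to prove the estimate for $k=2$ exactly as in CFSZ and then induct on the path length.

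Given the estimate, the $C_{2k+1}$-density of $g'_{\mathcal P}$ is at least $(\epsilon/8)^{2k+1}M^{-(2k+1)}$. Choosing $\eta$ small relative to $M$ and $\delta$ smaller still contradicts (iii). To handle the usual issue of $\eta$ versus $M$, we take the counting estimate per pair, relative to part sizes, via the standard weak-regularity trick of refining against a $\eta(M)$ sequence. We also use $n>n_0$ so that homomorphisms with repeated vertices, numbering $O(p^{2k}n^{2k})$ by (ii), are negligible compared to copies.

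\textbf{Step 4: conclusion.} By Step 3, $R$ has no odd closed walks of length at most $2k+1$, so the remaining graph $G'$ contains no $C_3,\dots,C_{2k+1}$. Every such cycle in $G'$ would map to an odd closed walk of length at most $2k+1$ in $R$. The total number of removed edges is at most $\epsilon pn^2$.
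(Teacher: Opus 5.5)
There is a genuine gap, at exactly the point you flag as ``the usual issue of $\eta$ versus $M$''.

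\textbf{The circularity in Steps~3--4.} These steps try to show that \emph{every} surviving odd closed walk in the reduced graph $R$ already forces $\delta p^{2k+1}n^{2k+1}$ copies of $C_{2k+1}$. That cannot work with the tools you set up. A single reduced cycle contributes density only about $(\epsilon/8)^{2k+1}\prod_r\mu(V_{i_r})$. A weak regularity partition gives no lower bound $\mu(V_i)\ge 1/M$, since parts can be tiny. Deleting the sparse pairs, which form a non-rectangular union of blocks, loses up to a factor $M^2$ in the cut-norm bound. So you need the counting error $\eta^{c}$ to beat a negative power of $M$, while $M=2^{O(\eta^{-2})}$, which is circular. ``Refining against an $\eta(M)$ sequence'' is not an argument, and nothing of the kind is needed.

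The missing idea is the one you name in your overview but never use. Use the counting lemma only to get the \emph{global} bound $t(C_{2k+1},\widetilde g)=O(\delta)$ for the bounded step function $\widetilde g=f_{\mathcal P}\1_{f_{\mathcal P}\le L}$. Then apply the dense weighted removal lemma (Theorem~\ref{thm:weighted-removal}) to $\widetilde g/L$ on the finite quotient space of parts, weighted by $|V_i|/n$. Its threshold depends only on $\epsilon,L,k$, so the constants can be chosen in order: first $\delta$, then the cut-norm accuracy $\delta^{2^{k+2}}$, then $M$. Because the resulting set $A$ is a union of blocks, $t(C_{2k+1},\widetilde f\1_A)=0$ and $\|\widetilde f-\widetilde f\1_A\|_1=\|\widetilde g-\widetilde g\1_A\|_1$. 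This is how the paper finishes, with no need to kill reduced cycles one at a time.

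\textbf{Step~1 and injectivity.} Step~1 also fails as written. Since $g=p^{-1}\1_G$ takes only the values $0$ and $1/p$, the pointwise truncation $\min(g,L)$ equals $L\1_G$ once $p<1/L$. Its mean is at most $L(L+\epsilon)p$, so regularizing it says nothing about $g$.
\begin{itemize}
\item The cutoff must be on block averages, $\1_{f_{\mathcal P}\le L}$, which is what Theorem~\ref{thm:weak-sparse-reg} provides. Hypothesis~(i), applied to that output partition itself, pays for the discarded blocks. Afterwards only rectangular restrictions are made, so the cut-norm bound survives.
\item Your alternative potential $t(C_{2k},g_{\mathcal P})$ is monotone under refinement, but you give no increment inequality showing that a large cut norm forces a definite increase. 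That is essentially the Schatten-norm framework the paper avoids.
\item Hypothesis (ii) alone does not bound non-injective homomorphisms by $O(p^{2k}n^{2k})$. Take a vertex joined to every vertex of a $C_4$-free polarity graph, with $p=n^{-1/2}$ and $k=2$. Then $\homn(C_4)=O(n^2)$, but there are at least order $n^{5/2}$ closed $5$-walks formed by a triangle plus a backtrack.
\end{itemize}
The paper sidesteps injectivity differently. It deletes parts of size at most $m(m+1)$, splits each remaining part into $m$ nearly equal pieces, and counts only walks with $x_r\in V^{(r)}$, at the cost of a factor $(m+1)^{-m}$.
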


The dense-pairs hypothesis in Theorem~\ref{thm:pascadi} is more restrictive in its
partition and diagonal requirements than that in \cite[Theorem~9 and Remark~11]{Pascadi},
with a different normalization of constants: here we consider all partitions into at
most $M$ parts and include diagonal pairs, whereas Pascadi considers partitions into
parts of size at least $\tau n$ and only off-diagonal pairs.
In this paper, we give a short proof of the version stated above
(Section~\ref{sec:proofs}), extending the method of Conlon, Fox, Sudakov and Zhao \cite{CFSZ21}. In Section~\ref{sec:applications}, we illustrate the method with a sparse
removal statement for odd product equations in finite groups under a local $C_4$-energy
condition. This statement is also covered by \cite[Corollary~76]{Pascadi}.

We also note the following consequence of Pascadi's theorem. A direct proof using our methods
is given in Section~\ref{sec:proofs}.

\begin{theorem}\label{thm:C4-removal}
Let $p\ge n^{-1/2}$ and $k\ge2$ be an integer. An $n$-vertex graph with
$o(p^4n^4)$ copies of $C_4$ and $o(p^{2k+1}n^{2k+1})$ copies of $C_{2k+1}$ can be made
$C_3,C_5,\ldots,C_{2k+1}$-free by removing $o(pn^2)$ edges.
\end{theorem}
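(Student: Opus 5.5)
Theorem~\ref{thm:C4-removal} follows from Theorem~\ref{thm:pascadi}, applied with $k$ replaced by $2$ for hypothesis (ii) via a $C_4$ count and with a suitable constant $L$. We fix $\epsilon>0$ and $k\ge2$ and argue by contradiction along a sequence of graphs. The first step is to pass from copies to homomorphisms. Homomorphic images of $C_4$ are genuine $4$-cycles, paths with two edges, or single edges. Hence
\[
\homn(C_4,G)\le 8\,\#\{C_4\}+O\Big(\sum_v d(v)^2\Big)+O(e(G)).
\]
To control $\sum_v d(v)^2$ and $e(G)$ we use the classical Cauchy--Schwarz bound $\homn(C_4,G)\ge (2e(G))^4/n^4$. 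If $e(G)\le \epsilon pn^2$ there is nothing to prove, since we may delete all edges. So we may assume $e(G)=\Theta(pn^2)$ up to constants depending on $\epsilon$. Then $\sum_v d(v)^2=\homn(P_3,G)$, and since $p\ge n^{-1/2}$, Cauchy--Schwarz on codegrees gives
\[
\homn(P_3,G)^2\le n^2\sum_{u,w}\mathrm{codeg}(u,w)^2,
\]
where the right-hand side is essentially $\#\{C_4\}$ plus $\homn(P_3,G)$ itself. Thus $\homn(P_3,G)\lesssim n\sqrt{o(p^4n^4)}+n^2=o(p^2n^3)+n^2$. This is $O(p^4n^4)$ because $p^4n^4\ge n^2$ and $p^2n^3\le p^4n^4$ when $pn\ge n^{1/2}$. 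Therefore $\homn(C_4,G)\le (C_0pn)^4$ for some constant $C_0$; in fact $t(C_4,G)=(2e(G)/n^2)^4+o(p^4)$.

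The key step is to convert this into hypothesis (ii) for $C_{2k}$ and into hypothesis (i). For (ii), we use the spectral inequality $\homn(C_{2k},G)=\sum\lambda_i^{2k}\le(\sum\lambda_i^4)^{k/2}=\homn(C_4,G)^{k/2}\le (C_0pn)^{2k}$. For (i), we use a Cauchy--Schwarz/spectral argument. For sets $X,Y$,
\[
e(X,Y)\le \lambda_1\sqrt{|X||Y|}+\text{error},
\]
but the cleaner route is the density form
\[
e(X,Y)^4\le |X|^2|Y|^2\,\homn(C_4,G).
\]
This gives
\[
e(X,Y)\le C_0pn\sqrt{|X||Y|}.
\]
That bound only yields density $Lp$ for parts of linear size. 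Since the partition has at most $M$ parts, the edges touching parts of size less than $\eta n$ number at most $M\eta n\cdot\max$ degree, and this needs care. Instead, we exploit the near-equality $t(C_4)\approx (2e/n^2)^4$, i.e.\ the quasirandomness from $o(p^4n^4)$ copies. The estimate $\sum\lambda_i^4=\lambda_1^4+o(p^4n^4)$ with $\lambda_1\approx 2e/n$ forces all other eigenvalues to be $o(pn)$. Then the expander mixing lemma gives
\[
e(V_i,V_j)\le \frac{\lambda_1}{n}|V_i||V_j|+o(pn)\sqrt{|V_i||V_j|}.
\]
The first term is at most $O(p)|V_i||V_j|$, so pairs with density at least $Lp$ (with $L$ large) satisfy $|V_i||V_j|\le o(n^2)$ in the sense $e(V_i,V_j)\le o(pn)\sqrt{|V_i||V_j|}\le o(pn^2)$. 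Summing over at most $M^2$ pairs gives $o(pn^2)\le\epsilon pn^2/4$ for large $n$. Hypothesis (iii) is given, so Theorem~\ref{thm:pascadi} applies with $C=C_0$ and $L$ chosen large. Alternatively, one can rerun the proof of Section~\ref{sec:proofs} directly with this spectral input.

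The main obstacle is the error terms and the order of quantifiers. We need $M$ and $\delta$ from Theorem~\ref{thm:pascadi} to be fixed before the $o(\cdot)$ quantities become small, and the degenerate homomorphism count must be genuinely $o(p^4n^4)$ or $O(p^4n^4)$, which is exactly where $p\ge n^{-1/2}$ enters. We handle this by a standard sequence/compactness phrasing: take a hypothetical counterexample sequence and derive that, for $n$ large, all three hypotheses hold with the fixed constants.
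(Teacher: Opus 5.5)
Your verification of hypothesis (ii) is fine and matches the paper in substance. Cauchy--Schwarz on codegrees gives $\homn(C_4,G)=O(T+n\sqrt T+n^2)=O(p^4n^4)$, where $T$ is the number of $4$-cycles; the paper instead uses $r\le2\binom r2+1$. The inequality $\sum_i\lambda_i^{2k}\le(\sum_i\lambda_i^4)^{k/2}$ plays the role of Galvin--Tetali.

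The gap is in hypothesis (i). Your spectral argument rests on $t(C_4,G)=(2e(G)/n^2)^4+o(p^4)$, and this fails exactly where the theorem has content. If $p^2n\to\infty$, your own bound gives $\homn(C_4,G)=o(p^4n^4)$, hence $e(G)=o(pn^2)$, and there is nothing to prove. If $p=\Theta(n^{-1/2})$ and $e(G)=\Theta(pn^2)$, the degenerate term $2\sum_vd(v)^2\ge8e(G)^2/n$ in $\homn(C_4,G)$ is itself of order $p^4n^4$.

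Concretely, take the point--line incidence graph of a projective plane of order $q$, with $n=2(q^2+q+1)$ and $p=n^{-1/2}$. It satisfies every hypothesis, since it has no $C_4$ and no odd cycles. Yet its spectrum contains $-\lambda_1=-(q+1)$, which is of order $pn$, and $\homn(C_4,G)\approx4\lambda_1^4$. So few $4$-cycles do not force a spectral gap; disjoint unions give further examples. The expander-mixing step is therefore unavailable. Even with a gap, for irregular $G$ the main term would be $\lambda_1\langle\1_X,v_1\rangle\langle\1_Y,v_1\rangle$ (with $v_1$ the top unit eigenvector), not $\lambda_1|X||Y|/n$.

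Your fallback $e(X,Y)^4\le|X|^2|Y|^2\homn(C_4,G)$ does not repair this. It bounds the edges in dense pairs only by roughly $M^2C_0^2pn^2/L$, and $M$ is produced by Theorem~\ref{thm:pascadi} only after $L$ is fixed.

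The missing ingredient is an estimate driven by the genuine $4$-cycle count $T$ rather than by $\homn(C_4,G)$, in which every $M$-dependent term is negligible compared with $pn^2$. That is exactly Lemma~\ref{lem:dense-pair}: edges in pairs of density at least $q$ number $O((q^{-1}+M^2+M^{1/4}T^{1/4})n)$. With $q=Lp$ and $p^2n\ge1$, this is at most $K(pn^2/L+M^2n+M^{1/4}\delta^{1/4}pn^2)$. Choosing $L$ first, then $M$ from Theorem~\ref{thm:pascadi}, then $\delta$ and $n_0$, gives hypothesis (i) with bound $\epsilon pn^2/4$.
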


Setting $p=n^{-1/2}$ in Theorem~\ref{thm:C4-removal} yields a cleaner statement in terms of
cycle counts.

\begin{corollary}\label{cor:C2k1-removal}
Let $k\ge2$ be an integer. An $n$-vertex graph with $o(n^2)$ copies of $C_4$ and
$o(n^{k+1/2})$ copies of $C_{2k+1}$ can be made $C_3,C_5,\ldots,C_{2k+1}$-free by removing
$o(n^{3/2})$ edges.
\end{corollary}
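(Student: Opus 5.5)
We deduce Corollary~\ref{cor:C2k1-removal} directly from Theorem~\ref{thm:C4-removal} by specializing to $p=n^{-1/2}$. This choice satisfies the hypothesis $p\ge n^{-1/2}$ with equality, so the theorem applies. The work is only to translate its hypotheses and conclusion into the stated cycle counts.

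First, the $C_4$ hypothesis. With $p=n^{-1/2}$ we have $p^4n^4=n^{-2}n^4=n^2$. So the assumption of $o(n^2)$ copies of $C_4$ is exactly the assumption $o(p^4n^4)$ of Theorem~\ref{thm:C4-removal}.

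Second, the odd-cycle hypothesis. We have
\[
p^{2k+1}n^{2k+1}=n^{-(2k+1)/2}\,n^{2k+1}=n^{(2k+1)/2}=n^{k+1/2},
\]
so $o(n^{k+1/2})$ copies of $C_{2k+1}$ is precisely $o(p^{2k+1}n^{2k+1})$.

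Third, the conclusion. Theorem~\ref{thm:C4-removal} lets us remove $o(pn^2)=o(n^{3/2})$ edges and thereby make the graph $C_3,C_5,\ldots,C_{2k+1}$-free. This is the statement of the corollary.

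The only point requiring care is the meaning of the $o(\cdot)$ notation. Read it as a statement about a sequence of graphs $G_n$ with $n\to\infty$, where $k$ is fixed and $p=p(n)=n^{-1/2}$. The hypothesis ``$o(p^4n^4)$'' means $\#C_4(G_n)/(p^4n^4)\to0$, and similarly for the other counts. Since the substitutions above are exact identities rather than asymptotic equivalences, the little-$o$ quantities transfer verbatim. Equivalently, in the $\epsilon$--$\delta$ form, for every $\epsilon>0$ there is $\delta>0$ such that fewer than $\delta n^2$ copies of $C_4$ and fewer than $\delta n^{k+1/2}$ copies of $C_{2k+1}$ force removability with at most $\epsilon n^{3/2}$ edges. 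This form follows from the corresponding $\epsilon$--$\delta$ form of Theorem~\ref{thm:C4-removal} at $p=n^{-1/2}$.

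There is no genuine obstacle here. The only thing to check is that the theorem is available at the endpoint $p=n^{-1/2}$, which the stated hypothesis $p\ge n^{-1/2}$ allows.
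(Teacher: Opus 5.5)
Your proposal is correct and is essentially the paper's derivation: the paper obtains the corollary by setting $p=n^{-1/2}$ in Theorem~\ref{thm:C4-removal}, so that $p^4n^4=n^2$, $p^{2k+1}n^{2k+1}=n^{k+1/2}$ and $pn^2=n^{3/2}$. Your $\epsilon$--$\delta$ reading also matches the paper's quantified restatement of that theorem, where $\delta$ and $n_0$ depend only on $\epsilon$ and $k$ (not on $p$), so the endpoint $p=n^{-1/2}$ is covered.
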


The $C_4$ hypothesis in
Corollary~\ref{cor:C2k1-removal} cannot be omitted, as shown by the following construction
(presented in Section~\ref{sec:tight}).

\begin{proposition}\label{prop:tight}
For any fixed $k\ge2$ and all sufficiently large $n$, there exists an $n$-vertex graph with
$O\!\left(n^2(1+1/8)^{\log_3 n}\right)$ copies of $C_4$ and
$O\!\left(n^{k+1/2}(1-2^{-2k})^{\log_3 n}\right)$ copies of $C_{2k+1}$ that cannot be made triangle-free by deleting
$o(n^{3/2})$ edges.
\end{proposition}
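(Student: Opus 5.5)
The plan is a \emph{random sparsification of a tensor power}. The two factors in the statement come straight from the spectrum of the triangle. Let $A$ be the adjacency matrix of $K_3$, with eigenvalues $2,-1,-1$ and edge density $2/3$. Then
\[
\frac{\operatorname{tr}(A^4)}{2^4}=\frac{18}{16}=1+\tfrac18,
\qquad
\frac{\operatorname{tr}(A^{2k+1})}{2^{2k+1}}=1-2^{-2k}.
\]
So these are exactly the normalized $C_4$ and $C_{2k+1}$ densities of $K_3$, relative to a random graph of the same density.

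Let $m=\lfloor\log_3 n\rfloor$ and let $T=K_3^{\otimes m}$ be the graph on $\Z_3^m$ in which $x\sim y$ iff $x_i\ne y_i$ for every coordinate $i$. Pad with isolated vertices to reach $n$ vertices, which changes nothing. By multiplicativity of homomorphism counts under tensor products,
\[
\homn(C_\ell,T)=\operatorname{tr}(A^\ell)^m .
\]
Hence, relative to its edge density $(2/3)^m$, $T$ has $C_4$-density inflated by $(9/8)^m$ and $C_{2k+1}$-density deflated by $(1-2^{-2k})^m$.

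Sparsifying the edges of $T$ independently with probability $q$ would give the right $C_4$ and odd-cycle counts. However, it also deflates triangles, by $(3/4)^m$, so the resulting graph would be nearly triangle-free. I would therefore sparsify by \emph{triangles} instead. Choose each triangle of $T$ independently with probability $q'$, and let $G$ be the union of the chosen triangles. Choose $q'$ so that the expected number of chosen triangles is $c\,n^{3/2}$; since $T$ has $n\cdot 2^m\cdot 2^{m-1}\cdot\tfrac13$-ish triangles, this gives $q'\approx n^{3/2}/(n 4^m)$. The proof then has three steps.

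\textbf{Step 1 (edge-disjoint triangles).} The expected number of pairs of chosen triangles sharing an edge is $O\big(n^{3/2}\cdot 2^{m}q'\big)$. I will check that this is $o(n^{3/2})$ for the chosen $q'$. Deleting one triangle from each such pair leaves $\Omega(n^{3/2})$ edge-disjoint triangles with high probability. Any triangle-free subgraph must miss an edge from each of them, so $G$ cannot be made triangle-free by deleting $o(n^{3/2})$ edges.

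\textbf{Step 2 (counting $C_4$ and $C_{2k+1}$).} Bound the expected numbers of copies of $C_4$ and of $C_{2k+1}$ in $G$. A cycle in $G$ is classified by which chosen triangles supply its edges.
\begin{itemize}
\item \emph{Generic part.} All edges come from distinct triangles, and no triangle is used for two edges. The expectation is at most $\homn(C_\ell,T)$ times $(\text{edge probability})^\ell$. The per-edge probability is $\rho\approx 2^{m}q'$ (up to constants), so that $|E(G)|\asymp n^{3/2}$. This contribution is
\[
n^\ell\bigl(\rho(2/3)^m\bigr)^\ell\cdot\frac{\operatorname{tr}(A^\ell)^m}{2^{\ell m}}
\asymp n^{\ell/2}\cdot\frac{\operatorname{tr}(A^\ell)^m}{2^{\ell m}} .
\]
For $\ell=4$ this gives $n^2(1+1/8)^m$, and for $\ell=2k+1$ it gives $n^{k+1/2}(1-2^{-2k})^m$, as required.
\item \emph{Degenerate part.} These are cycles that reuse a chosen triangle for two or more consecutive edges, so they shortcut through a triangle's third edge. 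Such a cycle corresponds to a shorter closed walk in $T$ together with fewer independent random choices. I will bound these by the same trace formula applied to shorter cycles, times the factor gained per reused triangle.
\end{itemize}
The variance, or simply Markov's inequality on the expectation, then yields a single graph satisfying both bounds together with Step 1.

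\textbf{Main obstacle.} The hard part is the degenerate part of Step 2 for $C_{2k+1}$. Walks that traverse a chosen triangle contribute cycles whose count is governed by $\operatorname{tr}(A^{2k-1})$ rather than $\operatorname{tr}(A^{2k+1})$. One must make sure this loses enough powers of $n$ (a reused triangle saves one factor $\rho n(2/3)^m$) so as not to swamp the $(1-2^{-2k})^m$ saving. This requires checking that $\rho n (2/3)^m\to\infty$ polynomially in $n$. That in turn constrains how $2^m$ compares with $n^{1/2}$. If the margin fails, I would first adjust the base: use a larger ground set of coordinates (padding $T$ by an independent random factor), or thin $T$ beforehand. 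The aim is to keep the per-edge density near $n^{-1/2}$ while preserving the per-coordinate ratios $9/8$ and $1-2^{-2k}$.
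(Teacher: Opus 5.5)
\emph{Same approach as the paper.} Your strategy matches it: take $K_3^{\otimes m}$, keep each triangle independently, classify cycles by the triangles supplying their edges, shortcut reused triangles, and apply $\homn(C_i,T)=\operatorname{tr}(A^i)^m$.

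\emph{The triangle count is wrong, and this breaks Step~1 and your explicit parameter.} If $x\sim y$ then $x_i\ne y_i$ for all $i$, so a common neighbour must take the third value in every coordinate. Hence \emph{every edge of $T$ lies in exactly one triangle}. So $T$ has $\homn(C_3,T)/6=6^{m-1}$ triangles, not $\approx n4^m/6$.
\begin{itemize}
\item Your explicit choice $q'\approx n^{3/2}/(n4^m)=(\sqrt3/4)^m$ gives only about $n^{3/2}2^{-m}/6=o(n^{3/2})$ chosen triangles. Every triangle of $G$ is then a chosen one, so $G$ can be made triangle-free by deleting $o(n^{3/2})$ edges, and the construction fails.
\item The value forced by your defining condition is $q'\asymp(\sqrt3/2)^m$, and the per-edge probability is $\rho=q'$, not $2^mq'$.
\item With this $q'$, your Step~1 bound $n^{3/2}2^mq'$ is of order $n^2$, so the check you promise would fail.
\end{itemize}
The missing observation repairs all of this. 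Chosen triangles are automatically edge-disjoint, so Step~1 is vacuous. Edges in distinct triangles are present independently, which is exactly what your generic bound $\homn(C_\ell,T)\,q'^{\ell}$ needs. Two cycle edges from one triangle must be consecutive, so the shortcut is well defined and determines the triangles.

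\emph{The ``main obstacle'' is misdiagnosed, and your fallback is unnecessary.} Reusing one triangle in a $C_{2k+1}$ gives a closed walk of length $2k$. That walk is governed by the \emph{inflated} even trace $\operatorname{tr}(A^{2k})=4^k+2$, not by $\operatorname{tr}(A^{2k-1})$.

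Knowing only that $\rho n(2/3)^m=n^{1/2}\to\infty$ polynomially is not enough. Trace ratios raised to the power $m=\log_3 n$ are also polynomial in $n$. What you need is the explicit comparison carried out in the paper:
\begin{itemize}
\item A $C_\ell$ meeting $i$ triangles contributes at most $2^{O(k)}q'^{\,i}\operatorname{tr}(A^i)^m$ in expectation.
\item The cases $i=2,3$ are impossible, by uniqueness of triangles.
\item For $i\ge4$, the ratio of consecutive terms is at least
\[
\Bigl(\frac{\sqrt3}{2}\cdot\frac{2^{i+1}-2}{2^{i}+2}\Bigr)^m\ge\Bigl(\frac{5\sqrt3}{6}\Bigr)^m>1 .
\]
Equivalently, the saving $n^{1/2}=\sqrt3^{\,m}$ per extra triangle beats the per-coordinate normalized-trace ratio, which is at most $6/5$.
\end{itemize}
Hence the $i=\ell$ term (your generic part) dominates and gives both target bounds. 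For $\ell=4$, only $i=4$ occurs.
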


\paragraph{Organization.}
In Section~\ref{sec:tight}, we prove Proposition~\ref{prop:tight}, showing that the $C_4$
hypothesis in Corollary~\ref{cor:C2k1-removal} cannot be omitted. Section~\ref{sec:prelim} introduces a weak
sparse regularity lemma and collects auxiliary results. Section~\ref{sec:counting} contains a
sparse counting lemma for odd cycles. Section~\ref{sec:proofs} provides the short proofs of
Theorem~\ref{thm:pascadi} and Theorem~\ref{thm:C4-removal}. In
Section~\ref{sec:applications}, we illustrate the method with sparse removal for odd product
equations in finite groups.

\section{A construction illustrating the $C_4$ condition}\label{sec:tight}

We extend the construction in the proof of \cite[Proposition~1.5]{CFSZ21} to longer odd cycles.

Let us restate Proposition~\ref{prop:tight} as follows:

\emph{For any fixed $k\ge2$, there exists an $n$-vertex graph with
$O\bigl(n^2(1+\frac18)^{\log_3 n}\bigr)$ copies of $C_4$ and
$O\bigl(n^{k+1/2}(1-\frac1{2^{2k}})^{\log_3 n}\bigr)$ copies of $C_{2k+1}$
that cannot be made triangle-free by deleting $o(n^{3/2})$ edges.}

The tensor product $G\otimes H$ of two graphs $G$ and $H$ is the graph with vertex set
$V(G)\times V(H)$, where $((x_1,y_1),(x_2,y_2))\in E(G\otimes H)$ if and only if
$(x_1,x_2)\in E(G)$ and $(y_1,y_2)\in E(H)$. Let $m$ be a positive integer, we may then
define $G^{\otimes m}$ inductively by $G^{\otimes1}=G$ and
$G^{\otimes m}=G\otimes G^{\otimes(m-1)}$. A key property, which we will need below, is that
$t(H,G^{\otimes m})=t(H,G)^m$ for any graph $H$ and $G$ and any positive integer $m$.

\begin{proof}[\normalfont\bfseries Proof of Proposition~\ref{prop:tight}]
Let $m$ be a positive integer and $G=C_3^{\otimes m}$. In other words, its vertices can be
labeled by $\F_3^m$ and two vertices are adjacent if and only if they differ in every coordinate.
Note that every edge of $G$ lies in a unique triangle. For three vertices on $C_3$, to make each
coordinate different, there are $6$ choices for each coordinate, thus this graph has $6^m/6$
triangles. We write $\homn(H,G)$ for the number of homomorphisms from $H$ to $G$.
The number of the closed walks of length $k$ in a triangle $C_3$ is equal to the $k$th moment
of the eigenvalues of the adjacency matrix of $C_3$ and so
$\homn(C_k,C_3)=2^k+2(-1)^k$. By the property of the tensor product, we have
$\homn(C_k,G)=\homn(C_k,C_3)^m=(2^k+2(-1)^k)^m$.

Let $G'$ be the graph obtained from $G$ by keeping every triangle independently with probability
$p=(\frac{\sqrt3}{2})^m$, so that in expectation $p6^{m-1}=3^{3m/2}/6$ triangles remain in $G'$.
Let $k\ge4$ be an integer. We shall estimate the expected number of $C_k$ in $G'$.

Fix $i$ triangles in $G$, $\lceil k/2\rceil\le i\le k$, let $\lambda_i$ be the number of
$C_k$ that exactly intersect these $i$ triangles in edges. Then there are $k-i$ triangles that
intersect this $C_k$ with two consecutive edges in $G$, and so we have
\[
\lambda_i\le\binom{i}{k-i}3^i\le2^{O(k)}.
\]
Conversely, note that such a class of $i$ triangles would yield a cycle $C_i$ of length $i$,
by replacing each two-edge segment by its third edge. The cases $i=2,3$ would put an edge
in two different triangles, so $i\ge4$; each resulting $C_i$ determines its triangles.
Together with $\homn(C_i,G)=(2^i+2(-1)^i)^m$ we obtain that the expected number of $C_k$
in $G'$ is at most
\[
\sum_{i=\max\{4,\lceil k/2\rceil\}}^k
\binom{i}{k-i}3^i p^i(2^i+2(-1)^i)^m
\le2^{O(k)}
\sum_{i=\max\{4,\lceil k/2\rceil\}}^k p^i(2^i+2(-1)^i)^m.
\]
Consecutive summands for $i\ge4$ increase by a factor at least $(5\sqrt3/6)^m>1$,
so this is at most $k2^{O(k)}(3^{k/2}(1-\frac1{2^{k-1}}))^m$ if $k$ is odd, and at most
$k2^{O(k)}(3^{k/2}(1+\frac1{2^{k-1}}))^m$ if $k$ is even.
Let $n=3^m$ and return to the fixed $k\ge2$ in the proposition.
Then the expected number of $C_{2k+1}$ in $G'$ is at most
\[
(2k+1)2^{O(2k+1)}
\left(3^{k+1/2}\left(1-\frac1{2^{2k}}\right)\right)^m
=O\!\left(n^{k+1/2}\left(1-\frac1{2^{2k}}\right)^{\log_3 n}\right)
\]
and the expected number of $C_4$ in $G'$ is $O(n^2(1+\frac18)^{\log_3 n})$.

The retained triangle count is binomial with mean $n^{3/2}/6$, so it is $\Theta(n^{3/2})$
with probability $1-o(1)$ by Chebyshev's inequality.
Therefore, by Markov's inequality, with positive probability, $G'$ is a graph on $n$ vertices
which is an edge-disjoint union of $\Theta(n^{3/2})$ triangles with
$O(n^{k+1/2}(1-\frac1{2^{2k}})^{\log_3 n})$ $C_{2k+1}$'s and
$O(n^2(1+\frac18)^{\log_3 n})$ $C_4$'s.
For arbitrary sufficiently large $n$, take $m=\lfloor\log_3n\rfloor$ and add isolated vertices;
all bounds are preserved since $3^m=\Theta(n)$.
\end{proof}

\section{An illustrative application}\label{sec:applications}

We illustrate Theorem~\ref{thm:pascadi} by a group removal statement
also covered by \cite[Corollary~76]{Pascadi}.
For $A\subseteq\Gamma$, put
$q_A(t)=\#\{(a,b)\in A^2:ab^{-1}=t\}$ and
$Q(A)=\sum_{t\ne1_\Gamma}\binom{q_A(t)}2$.
Thus $Q(A)$ counts pairs of distinct representations of nonidentity quotients.

\begin{theorem}[Sparse removal for odd product equations]\label{thm:sparse-group-removal}
Let $k\ge2$ be fixed and put $m=2k+1$. For every $\epsilon>0$ there exist
$\delta>0$ and $N_0$ such that the following holds. Let $\Gamma$ be a group
of order $N\ge N_0$, let $A_1,\ldots,A_m\subseteq\Gamma$, and let
$g\in\Gamma$. Suppose that $\sum_iQ(A_i)\le\delta N$ and that
$x_1\cdots x_m=g$, with $x_i\in A_i$, has at most
$\delta N^{k-1/2}$ solutions. Then there are subsets $A_i'\subseteq A_i$
with $|A_i\setminus A_i'|\le\epsilon N^{1/2}$ for every $i$, such that the
equation has no solution with $x_i\in A_i'$ for all $i$.
\end{theorem}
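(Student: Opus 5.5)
The plan is to encode the equation as a cyclic $m$-partite graph and apply Theorem~\ref{thm:pascadi} to it; this is the standard Cayley-graph encoding. Let $V(G)=\Gamma\times\Z_m$ with parts $V_i=\Gamma\times\{i\}$, so $n=mN$. For $1\le i<m$, join $(x,i)$ to $(xa,i+1)$ whenever $a\in A_i$. Join $(x,m)$ to $(y,1)$ whenever $y=xag^{-1}$ with $a\in A_m$. Each edge then carries a label $a\in A_i$, and each label $a\in A_i$ labels exactly $N$ edges.

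Any odd closed walk must wind around $\Z_m$ a nonzero number of times, since a walk with winding number $0$ has even length. Hence $G$ has no odd cycles of length $<m$. Every $C_m$ winds exactly once and arises from a solution $(a_1,\dots,a_m)$ together with a starting point $x\in\Gamma$. So $G$ has exactly $N\cdot\#\{\text{solutions}\}\le\delta N^{k+1/2}$ copies of $C_m$. Set $p=N^{-1/2}$. Then $pn^2=m^2N^{3/2}$, and condition (iii) holds with constant $\delta m^{-m}$.

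Next I verify conditions (ii) and (i) from the hypothesis $\sum_iQ(A_i)\le\delta N$.

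\emph{Preliminary bound on $|A_i|$.} Counting collisions gives $\sum_{t\ne1}q_A(t)=|A|^2-|A|$. I expect this to give $|A_i|=O(N^{1/2})$, so the maximum degree is $\Delta=O(N^{1/2})$. It is at this point that I would first check whether the desired conclusion is simply trivial. It is not, because Sidon-type sets of size about $\epsilon N^{1/2}$ are allowed.

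\emph{Condition (ii).} Let $M_G$ be the adjacency matrix. Closed $4$-walks in $G$ are of two kinds. Degenerate (backtracking) ones contribute $O(n\Delta^2)=O(N^2)$. Genuine ones correspond to nontrivial coincidences $ab^{-1}=cd^{-1}$ inside a pair of sets. Bounding these by Cauchy--Schwarz in terms of the $q_{A_i}$ gives a total of $O(N^2)$, and in fact the nondegenerate part is $O(\delta N^2)$. Then
\[
\homn(C_{2k},G)=\operatorname{tr}M_G^{2k}\le\Delta^{2k-4}\operatorname{tr}M_G^4=O(N^{k-2}\cdot N^2)=O((pn)^{2k}),
\]
which gives a constant $C$ depending only on $k$.

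\emph{Condition (i).} This is the delicate step, because $M$ depends on $L$ and the argument must not lose a factor of $M^2$ uncontrollably. I take $L=1$ and $C$ as above, and obtain $M,\delta_0$ from Theorem~\ref{thm:pascadi} with $\epsilon'$ in place of $\epsilon$. Only afterwards do I choose $\delta\ll\delta_0$, small in terms of $M$. Fix a pair $(X,Y)$ with $e(X,Y)\ge p|X||Y|$. By Cauchy--Schwarz,
\[
e(X,Y)^2\le|X|\sum_{y,y'\in Y}\mathrm{codeg}_X(y,y')\le|X|\,e(X,Y)+|X|\sum_{y\ne y'}\mathrm{codeg}(y,y').
\]
The off-diagonal codegree sum is at most $|Y|\bigl(\sum_{y\ne y'}\mathrm{codeg}^2\bigr)^{1/2}\le|Y|\,O(\delta^{1/2}N)$, because $\sum_{y\ne y'}\mathrm{codeg}^2$ counts nondegenerate $4$-walks. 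This leaves two cases.
\begin{itemize}
\item If the first term dominates, then $e\le2\min(|X|,|Y|)$. Summing over the at most $M^2$ pairs gives $O(Mn)=o(pn^2)$.
\item Otherwise $e\le O(\delta^{1/4})N^{1/2}\sqrt{|X||Y|}=O(\delta^{1/4}N^{3/2})$, and $M^2$ such pairs contribute at most $\epsilon'pn^2/4$ once $\delta$ is small in terms of $M$ and $\epsilon'$.
\end{itemize}
Thus condition (i) holds for large $N$.

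\emph{Applying the theorem and recovering the sets.} Theorem~\ref{thm:pascadi} yields a set $R$ of at most $\epsilon'pn^2=\epsilon'm^2N^{3/2}$ edges whose removal destroys every $C_m$. Delete a label $a$ from $A_i$ if at least $N/m$ of its $N$ edges lie in $R$. At most $m\epsilon'm^2N^{1/2}\le\epsilon N^{1/2}$ labels are deleted in total, choosing $\epsilon'=\epsilon/m^3$.

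Suppose a solution $(a_1,\dots,a_m)$ survives with all $a_i\in A_i'$. Its $N$ cycles, one for each starting point $x$, are edge-disjoint. Moreover, together they use every edge labelled by each $a_i$ exactly once. Each of these cycles contains an edge of $R$, so the labels $a_1,\dots,a_m$ together carry at least $N$ edges of $R$. Hence some $a_i$ carries at least $N/m$ edges of $R$ and should have been deleted, a contradiction. Therefore no solution survives.

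I expect the main obstacle to be the careful verification of (i) and (ii) from $Q(A_i)$ alone. This needs two things: the separation of degenerate from nondegenerate $4$-walks between mixed pairs $A_i,A_{i+1}$ (including the shifted closing pair), and the order in which the constants are chosen.
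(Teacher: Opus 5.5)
Your encoding (absorbing $g$ into the last layer), the exact count $N\cdot\#\{\text{solutions}\}$ of $C_m$'s, condition (iii) with constant $\delta m^{-m}$, and the label-pigeonhole recovery all agree with the paper. For (ii), your bound $\operatorname{tr}M_G^{2k}\le\Delta^{2k-4}\operatorname{tr}M_G^4$ is a valid, more elementary substitute for Lemma~\ref{lem:hom-C4} plus Galvin--Tetali. For the three-layer quadruples there you need that left and right quotient counts have equal square sums, since $ab^{-1}=cd^{-1}\iff c^{-1}a=d^{-1}b$.

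The genuine gap is condition (i), and first of all it cannot hold with $L=1$. Take $\Gamma=\Z/N\Z$ with $N=q^2+q+1$. Let $A_1=A_2=D$ be a Singer difference set: $|D|=q+1$ and every nonzero residue is a difference of exactly one ordered pair, so $Q(D)=0$. Let $A_3,\dots,A_m$ be singletons, so there are at most two solutions. With $X=\Gamma\times\{1\}$ and $Y=\Gamma\times\{2\}$ we get $e(X,Y)=N(q+1)>N^{3/2}=p|X||Y|$. So in the partition $\{X,Y,V\setminus(X\cup Y)\}$ a dense pair carries more than $N^{3/2}$ edges, while (i) permits only $\epsilon'pn^2/4=\epsilon N^{3/2}/(4m)$. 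This fails whenever $\epsilon\le 4m$.

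Your computation breaks because $\sum_{y\ne y'}\mathrm{codeg}(y,y')^2$ also counts the backtracking walks $y,x,y',x,y$. These total $\sum_x d_Y(x)(d_Y(x)-1)\asymp N^2$, not $O(\delta N^2)$; in the example, $\sum_{y\ne y'}\mathrm{codeg}_X(y,y')=N(N-1)$ exactly. These walks are the source of the $1/q$ term in Lemma~\ref{lem:dense-pair}. Consequently $L$ must be taken large in terms of $\epsilon$, and fixed before $M$ is obtained from Theorem~\ref{thm:pascadi}.

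Second, your claim that the nondegenerate closed $4$-walks number $O(\delta N^2)$ is false. Four-cycles through three consecutive layers correspond to $ab=cd$ with $a,c\in A_i$ and $b,d\in A_{i+1}$, and $Q$ does not control these. In the example, the solutions $a=d\ne b=c$ give of order $N^2$ such cycles through layers $1,2,3$. They lie in $G[X,Y]$ whenever $X$ contains layers $1$ and $3$ and $Y$ contains layer $2$. So feeding the total $4$-cycle count of $G$ into any dense-pair estimate cannot produce a bound that is small in $\delta$.

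The paper avoids this by splitting $G$ into the graphs $H_i$ of edges between layers $i$ and $i+1$. The $4$-cycles of $H_i$ number exactly $NQ(A_i)/2\le\delta N^2/2$. If $(U,W)$ is $Lp$-dense in $G$, the $H_i$ with $e_{H_i}(U,W)<Lp|U||W|/(2m)$ carry at most half of its edges. Lemma~\ref{lem:dense-pair}, applied to each $H_i$ with $q=Lp/(2m)$, then bounds the dense-pair edges by $O_m(N^{3/2}/L+M^2N+M^{1/4}\delta^{1/4}N^{3/2})$. The constants are chosen in the order: $L$; then $M,\delta_0,n_0$; then $\delta$; then $N_0$.
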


\begin{proof}
Since right multiplication preserves $Q$, we may assume $g=1_\Gamma$.
Following Kr\'al, Serra and Vena \cite{KSV}, let $\mathcal G$ have vertex set
$\Gamma\times\{1,\ldots,m\}$, with layer indices taken modulo $m$.
Join $(y,i)$ to $(yx_i,i+1)$ for each $x_i\in A_i$, and label this edge by $[x_i,i]$.

Observe that any $C_m$ visits the layers in cyclic order: its $m$ layer steps are
$\pm1$ and sum to an odd multiple of $m$, hence to $\pm m$.
Its labels therefore give a solution to $x_1\cdots x_m=1_\Gamma$.
In the opposite way, each solution gives $N$ vertex-disjoint cycles
\[
(z,1),(zx_1,2),\ldots,(zx_1\cdots x_{m-1},m),(z,1),
\qquad z\in\Gamma,
\]
since any vertex determines $z$.
Thus $\mathcal G$ contains at most $\delta N^{m/2}$ copies of $C_m$.

Put $n=mN$, $p=N^{-1/2}$, and assume $\delta\le1$.
For a set $A$ of size $s$, we have $s(s-1)\le N-1+Q(A)$ and
$\sum_tq_A(t)^2=2s^2-s+2Q(A)$, so $\sum_tq_{A_i}(t)^2=O(N)$.
Let $H_i$ contain only the edges between layers $i,i+1$, keeping all $n$ vertices.
Two distinct representations of a nonidentity quotient give two common neighbours;
counting each four-cycle twice yields $N(C_4,H_i)=NQ(A_i)/2\le\delta N^2/2$.
Every other $C_4$ uses three consecutive layers and corresponds to $ab=cd$,
where $a,c\in A_i$ and $b,d\in A_{i+1}$.
The left and right quotient counts of each set have equal squared sums, since
$ab^{-1}=cd^{-1}\iff c^{-1}a=d^{-1}b$.
Hence Cauchy--Schwarz applied to $c^{-1}a=db^{-1}$ gives $O(N)$ such quadruples
and, allowing $N$ starting vertices, $O(N^2)$ such cycles.
Thus $N(C_4,\mathcal G)=O(N^2)$; as $p\ge n^{-1/2}$,
Lemma~\ref{lem:hom-C2k} gives $\homn(C_{2k},\mathcal G)\le(Cpn)^{2k}$,
with $C$ depending only on $m$.

Consider any partition into at most $M$ parts.
If $e_{\mathcal G}(U,W)\ge Lp|U||W|$, the $H_i$ with
$e_{H_i}(U,W)<Lp|U||W|/(2m)$ together contribute at most half these edges.
Applying Lemma~\ref{lem:dense-pair} to the remaining contributions and summing over $i$,
the number of edges in dense pairs is at most
\[
K_m\left(\frac{N^{3/2}}L+M^2N+
M^{1/4}\delta^{1/4}N^{3/2}\right).
\]
This includes diagonal pairs, counting internal edges twice in $e(U,U)$
and each edge once in the final bound.
Choose $L$ sufficiently large, and take $M,\delta_0,n_0$ from
Theorem~\ref{thm:pascadi} with deletion tolerance $\epsilon/m^3$.
Next choose $\delta\le\min\{1,\delta_0m^m\}$ sufficiently small and then
$N_0$ sufficiently large that $mN>n_0$ and the displayed bound is at most
$\epsilon N^{3/2}/(4m)$.
Since also $\delta N^{m/2}\le\delta_0p^mn^m$, Theorem~\ref{thm:pascadi}
makes $\mathcal G$ $C_m$-free by deleting at most $\epsilon N^{3/2}/m$ edges.

In $A_i$, remove $x_i$ if at least $N/m$ edges labelled $[x_i,i]$ have been deleted.
This removes at most $\epsilon\sqrt N$ elements from each $A_i$.
Let $A_i'$ denote the remaining set.
If a solution remained, each of its $N$ edge-disjoint cycles would contain a deleted edge.
By the pigeonhole principle, at least $N/m$ deleted edges would have one of its $m$ labels,
contradicting the definition of $A_i'$.
Multiplying the last remaining set by $g$ returns to the original equation.
The proof is complete.
\end{proof}

A set $S\subseteq[n]$ is \emph{Sidon} if every solution of
$x_1+x_2=x_3+x_4$ in $S$ has $\{x_1,x_2\}=\{x_3,x_4\}$ as multisets.
The following consequence is known: its five-variable case is
\cite[Theorem~1.14]{CFSZ21}, the extension to longer equations is noted
in the footnote on p.~2380 there, and Prendiville
\cite[Corollary~1.3]{Prendiville} gives the stronger
bound $O_{b_1,\ldots,b_{2k}}(\sqrt n/\log\log n)$ for more general equations.

\begin{samepage}
\begin{corollary}[A Sidon-set consequence]\label{cor:sidon-average}
Fix $k\ge2$ and positive integers $b_1,\ldots,b_{2k}$, and put
$B=b_1+\cdots+b_{2k}$. If a Sidon set $S\subseteq[n]$ contains no solution
with pairwise distinct variables to
\[
 b_1x_1+\cdots+b_{2k}x_{2k}=Bx_{2k+1},
\]
then $|S|=o(n^{1/2})$.
\end{corollary}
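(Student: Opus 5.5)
We will deduce the corollary from Theorem~\ref{thm:sparse-group-removal} with $\Gamma=\Z/N\Z$ for a prime $N$ slightly larger than $(B+1)n$, so that no wraparound occurs and integer solutions coincide with solutions modulo $N$. Put $m=2k+1$ and suppose for contradiction that $|S|\ge c\sqrt n$ for some fixed $c>0$ along a sequence of $n\to\infty$. Set $A_i=b_i\cdot S$ for $i\le 2k$, $A_{2k+1}=-B\cdot S$, and $g=0$. Multiplication by a unit preserves $Q$, and since $S$ is Sidon (and no wraparound occurs), every nonzero difference in $S$ has a unique representation. Hence $Q(A_i)=0$, and the hypothesis $\sum_iQ(A_i)\le\delta N$ holds trivially.

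The first main step is to bound the number of solutions of $b_1x_1+\cdots+b_{2k}x_{2k}=Bx_{2k+1}$ with all $x_j\in S$. By assumption, every solution has two equal variables. The trivial solutions $x_1=\cdots=x_{2k+1}$ number $|S|\le N$, which is at most $\delta N^{k-1/2}$ since $k\ge2$. For a degenerate solution with some $x_i=x_j$, merging the two variables gives an equation in $2k$ variables. Fixing $2k-3$ of them leaves a three-variable linear equation in $S$, and such an equation has $O(|S|)$ solutions when $S$ is Sidon. The reason is that once one variable is fixed, the equation $ax+by=c$ has at most one solution in a set, which gives $O(|S|^2)$ in general; to reach $O(|S|)$ we instead merge further and use the Sidon property on four-variable subsums. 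The target bound is $O(|S|^{2k-2})=O(n^{k-1})=o(N^{k-1/2})$. A cruder version suffices: bound by $|S|^{2k-3}$ times the number of solutions of a three-term equation $a x+by+cz=d$ in $S$, which is at most $|S|^2$. This gives $O(|S|^{2k-1})=O(n^{k-1/2})$, which is exactly at the threshold and therefore not quite enough. This step is the main obstacle. I would first try to show that a four-variable equation $ax+by=cz+dw+e$ has $O(|S|^2)$ solutions using Sidon energy, via Cauchy--Schwarz on the representation counts of the dilates $aS+bS$. The Sidon property gives additive energy $O(|S|^2)$ for $S$, and hence (by the same Cauchy--Schwarz used in the proof of Theorem~\ref{thm:sparse-group-removal}) $O(|S|^2)$ for any pair of dilates. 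This bound yields $O(|S|^{2k-2})$ degenerate solutions.

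With at most $\delta N^{k-1/2}$ solutions, Theorem~\ref{thm:sparse-group-removal} produces sets $A_i'$ with $|A_i\setminus A_i'|\le\epsilon\sqrt N$ and no solutions. Pulling back, let $S'=S\setminus\bigcup_i\{x: \text{the corresponding dilate of } x\notin A_i'\}$. Then $|S\setminus S'|\le m\epsilon\sqrt N=O(\epsilon\sqrt n)$. Choose $\epsilon$ small compared with $c$, so that $S'$ is nonempty. Any $x\in S'$ then gives the trivial solution $x_1=\cdots=x_{2k+1}=x$ with every $x_i\in A_i'$, which is a contradiction. Hence $|S|=o(\sqrt n)$.
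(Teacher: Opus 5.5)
Your argument is correct once the counting paragraph is cleaned up, and its skeleton matches the paper's proof. Both use dilates $a_iS$ in a cyclic group of order $\Theta(n)$ with no wraparound, get $Q(A_i)=0$, bound degenerate solutions by merging two equal variables, and finish with the diagonal solution $x_1=\cdots=x_{2k+1}$.

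The one genuine difference is the counting step. You fix $2k-4$ of the merged variables. You then bound the remaining equation $ax+by+cz+dw=e$ by $\sum_t r_{aS+bS}(t)\,r_{cS+dS}(e-t)\le E(aS,bS)^{1/2}E(cS,dS)^{1/2}\le E(S)=2s^2-s$. The last inequality is Cauchy--Schwarz on difference representation counts. The paper instead writes the whole $2k$-variable count as a Fourier integral and uses H\"older and periodicity to get $\int_0^1|f|^{2k}\le s^{2k-4}\int_0^1|f|^4$. Both give exactly $s^{2k-4}E(S)=O(n^{k-1})$ per merged equation. Yours is elementary and stays in physical space; the paper's handles all $2k$ variables symmetrically without choosing which to fix. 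The remaining differences are cosmetic. Your prime modulus in $((B+1)n,2(B+1)n)$ and the paper's $N\equiv1\pmod{B\prod_ib_i}$ do the same job: invertible $a_i$, and no new solutions or difference coincidences. Arguing by contradiction rather than showing $S=\bigcup_iD_i$ is also immaterial.

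For the write-up, delete the claim that a three-variable equation has $O(|S|)$ solutions in a Sidon set. You never justify it, and your next sentence only yields $O(|S|^2)$. Also delete the self-contradictory ``a cruder version suffices''. The proof should rest solely on the four-variable energy bound. (The three-variable route would also have worked with the Fourier--H\"older bound $O(s^{3/2})$, giving $O(s^{2k-3/2})=o(n^{k-1/2})$, but you do not need it.)

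You should also add two things:
\begin{enumerate}
\item State explicitly that every merged coefficient is nonzero: it is either $b_i+b_j>0$ or $b_i-B<0$. Your energy step needs this, since dilation by $0$ does not preserve energy.
\item Record $s(s-1)\le 2(n-1)$, which is what converts $O(s^{2k-2})$ into $O(n^{k-1})$.
\end{enumerate}
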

\end{samepage}

\begin{proof}
Assume $S\ne\varnothing$, set $s=|S|$, and write
$f(t)=\sum_{x\in S}e^{2\pi ixt}$. Unique representation of nonzero
differences gives $s(s-1)\le2(n-1)$, hence $s=O(\sqrt n)$.
For nonzero integers $c_1,\ldots,c_{2k}$, Fourier orthogonality,
H\"older's inequality and periodicity give
\[
\begin{aligned}
 \#\{\mathbf x\in S^{2k}:\textstyle\sum_i c_ix_i=0\}
 &\le\int_0^1|f(t)|^{2k}\,dt
 \le s^{2k-4}\int_0^1|f(t)|^4\,dt\\
 &=s^{2k-4}(2s^2-s)=O(n^{k-1}),
\end{aligned}
\]
using $|f|\le s$ and the Sidon fourth-moment identity.
Every solution in the corollary has two equal variables; merging them
leaves nonzero coefficients, since the merged coefficient is either
$b_i+b_j>0$ or $b_i-B<0$. Summing over all pairs bounds the total
number of solutions by $O(n^{k-1})=o(n^{k-1/2})$.

Put $(a_1,\ldots,a_{2k+1})=(b_1,\ldots,b_{2k},-B)$.
Choose $N>2Bn$ with $N\equiv1\pmod{B\prod_i b_i}$ and $N=O(n)$,
possible because the coefficients are fixed.
Modulo $N$, neither the difference equalities nor $\sum_i a_ix_i=0$
acquires additional solutions, and each $a_i$ is invertible.
Thus $A_i=a_iS\subseteq\Z/N\Z$ satisfies $Q(A_i)=0$.
As $N=\Theta(n)$, Theorem~\ref{thm:sparse-group-removal}, in additive
notation, eliminates all solutions by deleting $o(\sqrt n)$ elements
from each $A_i$. Pulling these deletions back under multiplication by
$a_i$ gives sets $D_i\subseteq S$, each of size $o(\sqrt n)$.
Any $x\in S\setminus\bigcup_iD_i$ would still give the diagonal solution,
since $\sum_i a_i=0$. Hence $S=\bigcup_iD_i$ and
$|S|\le\sum_i|D_i|=o(\sqrt n)$.
\end{proof}

\section{Useful lemmas}\label{sec:prelim}

\subsection{A weak sparse regularity lemma}\label{subsec:weak-reg}

Given a pair of probability spaces $V_1$ and $V_2$, usually vertex sets with uniform measure,
we define the cut norm for a measurable function $f:V_1\times V_2\to\R$ by
\begin{equation}\label{eq:cutnorm}
\|f\|_\square
\coloneqq
\sup_{A\subseteq V_1,\ B\subseteq V_2}
\left|
\E_{x\in V_1,y\in V_2}
f(x,y)\1_A(x)\1_B(y)
\right|.
\end{equation}
Here $A$ and $B$ range over all measurable subsets and $x,y$ are chosen independently according
to the corresponding probability measures.

Given a partition $\mathcal P$ of a probability space $V$ and a function
$f:V\times V\to\R$, we write
\[
f_{\mathcal P}:V\times V\to\R
\]
for the function obtained from $f$ by averaging over blocks $A\times B$ where $A$ and $B$ are
parts of $\mathcal P$; that is,
\[
f_{\mathcal P}(x,y)
=
\frac{1}{\mu(A)\mu(B)}
\int_{A\times B} f
\qquad\text{for }(x,y)\in A\times B,
\]
where $\mu(\cdot)$ is the probability measure on $V$ and zero-measure parts are ignored.

The weak regularity lemma for kernels of Frieze and Kannan \cite{FK99}
(see also \cite[Corollary 9.13]{Lovasz}) is a related statement about approximation by
stepfunctions in the cut norm. Furthermore, the step function can be obtained by averaging the
original function over steps; such steppings are almost optimal. This analytic perspective has
been popularized by the development of graph limits \cite{LovaszSzegedy}.

Let $(f-f_{\mathcal P})\1_{f_{\mathcal P}\le1}$ denote the function with value
$f(x,y)-f_{\mathcal P}(x,y)$ if $f_{\mathcal P}(x,y)\le1$ and $0$ otherwise. We will apply the
following weak sparse regularity lemma \cite{CFSZ21}, in which we neglect those parts of the graph
that are too dense through the cutoff term $\1_{f_{\mathcal P}\le1}$.

\begin{theorem}[Conlon, Fox, Sudakov and Zhao \cite{CFSZ21}]\label{thm:weak-sparse-reg}
Let $\epsilon>0$, and let $V$ be a probability space and
$f:V\times V\to[0,\infty)$ be a measurable symmetric function. Then there exists a partition
$\mathcal P$ of $V$ into at most
\[
2^{32\E(f)\epsilon^{-2}}
\]
parts such that
\[
\|(f-f_{\mathcal P})\1_{f_{\mathcal P}\le1}\|_\square\le\epsilon.
\]
\end{theorem}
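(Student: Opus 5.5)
We would prove Theorem~\ref{thm:weak-sparse-reg} by the standard energy-increment argument of Frieze--Kannan, adapted so that the energy is only measured on the part of the function where the stepped function is at most $1$. Write $q(\mathcal P)=\E\bigl[f_{\mathcal P}^2\,\1_{f_{\mathcal P}\le 1}\bigr]$ or, more robustly, the truncated energy $\E[\min(f_{\mathcal P},2)^2]$-type quantity. The intended bound on the number of iterations is $O(\E(f)\epsilon^{-2})$. This reflects the fact that an energy bounded by a constant multiple of $\E(f)$ can increase by $\Omega(\epsilon^2)$ at most $O(\E(f)/\epsilon^2)$ times. At each step the number of parts at most squares or, with the usual refinement by two sets, multiplies by $4$; this gives $2^{O(\E(f)\epsilon^{-2})}$ parts. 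Concretely, we plan to use the potential $\phi(\mathcal P)=\E[\psi(f_{\mathcal P})]$ with $\psi(x)=x^2$ for $x\le 2$ and $\psi(x)=4x-4$ for $x>2$. This function is convex, satisfies $\psi(x)\le 4x$, and is therefore bounded by $4\E(f_{\mathcal P})=4\E(f)$.

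\textbf{Key steps.} Start with the trivial partition. If $\|(f-f_{\mathcal P})\1_{f_{\mathcal P}\le1}\|_\square>\epsilon$, pick $A,B$ witnessing this. Let $D$ be the union of the blocks $P_i\times P_j$ with $f_{\mathcal P}\le 1$. Refine $\mathcal P$ by $A$ and $B$ (and symmetrically by their roles), which at most multiplies the number of parts by $4$. Call the refinement $\mathcal Q$. Then $\E[(f_{\mathcal Q}-f_{\mathcal P})\1_{D}\1_{A\times B}]$, after passing through the conditional expectation $f_{\mathcal Q}$ (note $A\times B\cap D$ is $\mathcal Q$-measurable), equals the witness quantity and so has absolute value greater than $\epsilon$. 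Convexity of $\psi$ and the fact that $f_{\mathcal P}$ is the conditional expectation of $f_{\mathcal Q}$ give
\[
\phi(\mathcal Q)-\phi(\mathcal P)=\E\bigl[\psi(f_{\mathcal Q})-\psi(f_{\mathcal P})-\psi'(f_{\mathcal P})(f_{\mathcal Q}-f_{\mathcal P})\bigr]\ge0
\]
on every block. On blocks of $D$, where $f_{\mathcal P}\le1$, we would lower-bound the Bregman divergence of $\psi$ by a quadratic: $\psi(y)-\psi(x)-\psi'(x)(y-x)\ge c\min\{(y-x)^2,|y-x|\}$ for $x\le1$, with an absolute constant $c$, because $\psi$ is quadratic on $[0,2]$ and $x$ stays at distance $1$ from the kink. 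Cauchy--Schwarz then gives an increment of order $\epsilon^2$ when the deviations are small, and the linear branch gives an increment of order $\epsilon\ge\epsilon^2$ when they are large, assuming $\epsilon\le1$. The case $\epsilon>1$ is trivial, since then $\|\cdot\|_\square\le 1\cdot$ sup bounds fail only if... we would handle this separately, or absorb it into the constant.

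\textbf{Bookkeeping.} With an increment of at least $\epsilon^2/8$ per step and $0\le\phi\le4\E(f)$, there are at most $32\E(f)\epsilon^{-2}$ steps. Each step at most doubles the number of parts if we refine by the single set $A\cup B$'s Venn atoms cleverly. Getting exactly $2^{32\E(f)\epsilon^{-2}}$ requires matching these constants, which we would tune by choosing how many sets are refined per step.

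\textbf{Main obstacle.} The delicate step is the lower bound on the potential increment restricted to $D$ when $f_{\mathcal Q}$ may exceed $1$, and indeed be unbounded, inside blocks of $D$. A pure $L^2$ energy would not be bounded by $O(\E f)$, and a truncated $\min(f,1)$ would lose convexity information. The piecewise quadratic/linear $\psi$ is what we would try first. We would verify carefully the Bregman inequality $\ge c\min(t^2,|t|)$ for $x\in[0,1]$ and all $y\ge0$, and then combine it with Jensen over the measure $\1_{D\cap(A\times B)}$.
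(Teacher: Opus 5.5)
This theorem is not proved in the paper; it is quoted from \cite{CFSZ21}. Your plan is essentially the standard argument for it: an energy increment for the convex potential $\E\psi(f_{\mathcal P})$ with $\psi(x)=x^2$ on $[0,2]$ and $\psi(x)=4x-4$ beyond, refining by the two witnessing sets. The conceptual steps are sound. In particular, your Bregman bound holds with $c=1$: for $x\le1$ and $y\le2$ the divergence is exactly $(y-x)^2$, while for $y>2$ it equals $(2-x)\bigl(2(y-x)-(2-x)\bigr)>y-x$.

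The real defect is the final count, which as written does not give the stated bound and rests on a false claim. Refining a part $P$ by $A$ and $B$ produces up to four atoms: $P\cap A\cap B$, $P\cap A\setminus B$, $P\cap B\setminus A$ and $P\setminus(A\cup B)$. Refining by $A\cup B$ alone does not make $D\cap(A\times B)$ measurable, so the claimed doubling fails. With your increment $\epsilon^2/8$ you would get $32\E(f)\epsilon^{-2}$ steps and hence $4^{32\E(f)\epsilon^{-2}}=2^{64\E(f)\epsilon^{-2}}$ parts.

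The fix is already in your two-case argument once the constants are tracked. Let $h=|f_{\mathcal Q}-f_{\mathcal P}|\1_D$, so that $\E h>\epsilon$ and the increment is at least $\E[h^2\1_{h\le1}]+\E[h\1_{h>1}]$. Either $\E[h\1_{h>1}]\ge\epsilon/2$, giving an increment of at least $\epsilon/2>\epsilon^2/4$. Or $\E[h\1_{h\le1}]>\epsilon/2$, and then Cauchy--Schwarz on the probability space gives $\E[h^2\1_{h\le1}]>\epsilon^2/4$. Since $0\le\E\psi(f_{\mathcal P})\le4\E(f)$, there are fewer than $16\E(f)\epsilon^{-2}$ steps. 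Each step multiplies the number of parts by at most $4$, which yields exactly $4^{16\E(f)\epsilon^{-2}}=2^{32\E(f)\epsilon^{-2}}$.

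The case $\epsilon\ge1$ also needs a clean reason in place of your unfinished sentence. For all $A,B$, both $\E[f\1_{D\cap(A\times B)}]$ and $\E[f_{\mathcal P}\1_{D\cap(A\times B)}]$ lie in $[0,\E(f_{\mathcal P}\1_D)]\subseteq[0,1]$, because $D$ is a union of blocks on which $f_{\mathcal P}\le1$. So the cut norm in question is always at most $1$, and the trivial partition works.
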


\subsection{Homomorphism density}\label{subsec:hom-density}

Recall the graph homomorphism density $t(H,G)$ from Definition~\ref{def:hom-density-intro}.

Let $V$ be a probability space and let $f:V\times V\to[0,\infty)$ be measurable and symmetric.

\begin{definition}[Homomorphism density in function]\label{def:hom-density-function}
Let $H$ be a graph. The $H$-density in $f$ is
\[
t(H,f)
=
\int_{V^{V(H)}}
\prod_{ij\in E(H)} f(x_i,x_j)
\prod_{i\in V(H)}dx_i.
\]
\end{definition}

We should think of $f$ as a normalized edge-indicator function of the form
$p^{-1}\1_{E(G)}$.

\begin{lemma}\label{lem:hom-C4}
Let $C>0$, $p\ge n^{-1/2}$, and suppose that $G$ has at most $(Cpn)^4$ copies of $C_4$.
Then
\[
\homn(C_4,G)\le(24C^4+3)p^4n^4.
\]
\end{lemma}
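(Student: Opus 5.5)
Our plan is to split $\homn(C_4,G)$ by the pattern of coincidences among the four image vertices, and to bound each pattern using only the hypothesis and the trivial bounds available from $p\ge n^{-1/2}$. Write a homomorphism as a closed walk $(x_1,x_2,x_3,x_4)$ with $x_ix_{i+1}\in E(G)$, indices mod $4$. Since $G$ has no loops, consecutive vertices are distinct. Hence the only possible coincidences are $x_1=x_3$ and/or $x_2=x_4$, which gives three classes:
\begin{enumerate}[label=(\alph*)]
\item all four vertices distinct;
\item exactly one of $x_1=x_3$, $x_2=x_4$ holds;
\item both hold.
\end{enumerate}

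\emph{Class (a).} Here $x_1x_2x_3x_4$ is a genuine $4$-cycle. Each unlabeled copy of $C_4$ corresponds to exactly $|\mathrm{Aut}(C_4)|=8$ such homomorphisms, namely $4$ rotations times $2$ reflections. So this class contributes at most $8(Cpn)^4$. To match the stated constant we will keep track of whichever normalization of ``copies'' the paper uses. The constant $24C^4$ suggests the author may count with a factor $24$, for instance by allowing the hypothesis to be read via $4!$ orderings. Either way the bound is at most $24C^4p^4n^4$, since $8\le 24$.

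\emph{Class (b).} Suppose, say, $x_1=x_3$ and $x_2\ne x_4$. The walk is then a path $x_2x_1x_4$ of length $2$ traversed as a cherry. The number of these is $\sum_v d(v)(d(v)-1)\le \sum_v d(v)^2$, and the case $x_2=x_4$ is symmetric. We need $\sum_v d(v)^2=O(p^4n^4)$ in terms of $C$. For this we will use the standard inequality
\[
\sum_v \binom{d(v)}{2}=\#\{\text{cherries}\}.
\]
Each pair of vertices $\{a,b\}$ with codegree $c_{ab}$ contributes $\binom{c_{ab}}2$ copies of $C_4$ per pair $a,b$. By Cauchy--Schwarz,
\[
\sum_{a,b}c_{ab}^2\ge \frac{\bigl(\sum_{a,b}c_{ab}\bigr)^2}{n^2}.
\]
This bounds the number of cherries $P_2=\sum_{a\neq b} c_{ab}$ by
\[
P_2\le n\Bigl(\sum c_{ab}^2\Bigr)^{1/2}\lesssim n\bigl(P_2+(\#C_4)\bigr)^{1/2}.
\]
It follows that $P_2\lesssim n^2+n\,(Cpn)^2$. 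Using $p\ge n^{-1/2}$ we have $n^2\le p^4n^4$ and $n\cdot p^2n^2\le p^4n^4$ (since $pn\ge n^{1/2}$ gives $p^2n^3\le p^4n^4$). So class (b) is $O((C^2+1)p^4n^4)$.

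\emph{Class (c).} Both coincidences give an edge traversed back and forth. There are $2e(G)\le \sum_v d(v)^2 \le \dots$ of these. More simply, $2e(G)\le n^2\le p^4n^4$, because $p^4n^2\ge1$.

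The main obstacle is pinning down the constants $24$ and $3$ exactly. In particular, class (b) must avoid a $C^2$ term. We will try to absorb it via $C^2\le C^4+1$, or handle class (b) more carefully as follows. The degenerate homomorphisms of classes (b) and (c) number exactly $\sum_v d(v)^2+\sum_v d(v)(d(v)-1)$, that is, $2\sum_v d(v)^2-2e$. We then bound $\sum_v d(v)^2$ by $\sum_{a,b}c_{ab}$ and use $c_{ab}\le 1+\binom{c_{ab}}{2}$ termwise. This gives
\[
\sum_{a,b}c_{ab}\le n^2+2\cdot(\#C_4)\cdot(\text{const}).
\]
That yields a bound of the form $\alpha C^4p^4n^4+\beta n^2$ with $n^2\le p^4n^4$, and we will adjust the bookkeeping until the total reads $(24C^4+3)p^4n^4$.
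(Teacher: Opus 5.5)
Your second route is essentially the paper's proof: your coincidence classes give exactly the identity $\homn(C_4,G)=8T+2\sum_v d(v)^2-2e(G)$, and the paper likewise bounds $\sum_v d(v)^2$ through codegrees with a termwise inequality and then uses $n^2\le p^4n^4$. The paper's $24$ is $8+2\cdot 8$, coming from the weaker $c\le 2\binom{c}{2}+1$, not from a $4!$ normalization, so the Cauchy--Schwarz detour is unnecessary.

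One point needs tidying. Apply $c_{ab}\le 1+\binom{c_{ab}}{2}$ only to pairs $a\ne b$, where $\sum_{a\ne b}\binom{c_{ab}}{2}=4T$. The diagonal terms $c_{aa}=d(a)$ are not controlled by $T$ and must be counted separately as $2e(G)\le n(n-1)$. With this, your sharper inequality gives $\homn(C_4,G)\le 16T+2n(n-1)+2e(G)\le 16T+3n^2\le(24C^4+3)p^4n^4$.
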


\begin{proof}
Let $T$ be the number of unlabelled copies of $C_4$ in $G$, write $d(v)$ for the degree of $v$, and let $r(x,y)$
denote the number of walks of length $2$ from $x$ to $y$. In particular,
$r(x,x)=d(x)$. Counting closed walks gives
\[
 \homn(C_4,G)=8T+2\sum_v d(v)^2-2e(G).
\]
For every nonnegative integer $r$, $r\le2\binom r2+1$. Since
$\sum_{x\ne y}\binom{r(x,y)}2=4T$, we have
\[
 \sum_vd(v)^2=\sum_{x\ne y}r(x,y)+2e(G)
 \le8T+n(n-1)+2e(G).
\]
Consequently, $\homn(C_4,G)\le24T+3n^2$. As $p\ge n^{-1/2}$,
this is at most $(24C^4+3)p^4n^4$, proving the claim.
\end{proof}

\begin{lemma}[Galvin and Tetali \cite{GalvinTetali}]\label{lem:galvin-tetali}
For every $h$-vertex $d$-regular bipartite graph $H$ with $d\ge1$ and any graph $G$,
\[
\homn(H,G)\le \homn(K_{d,d},G)^{h/(2d)}.
\]
\end{lemma}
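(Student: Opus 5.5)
We plan to use the entropy method: take a uniformly random homomorphism and bound its entropy vertex by vertex. If $\homn(H,G)=0$ there is nothing to prove, so assume it is positive. Let $V(H)=A\cup B$ be the bipartition. Because $H$ is $d$-regular with $d\ge1$, double counting edges gives $d|A|=e(H)=d|B|$, so $|A|=|B|=h/2$. Let $X=(X_u)_{u\in V(H)}$ be a uniformly random homomorphism $H\to G$. Then $\mathbb H(X)=\log\homn(H,G)$, where $\mathbb H$ denotes Shannon entropy. The chain rule gives
\[
\mathbb H(X)=\mathbb H(X_A)+\mathbb H(X_B\mid X_A)\le \mathbb H(X_A)+\sum_{v\in B}\mathbb H(X_v\mid X_{N(v)}).
\]
Here the inequality uses subadditivity, together with the fact that conditioning on the smaller family $X_{N(v)}\subseteq X_A$ can only increase entropy.

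For the first term we apply Shearer's lemma to the family $\{N(v):v\in B\}$ of subsets of $A$. Every $a\in A$ has exactly $d$ neighbours in $B$, so it lies in exactly $d$ of these sets. Shearer's lemma then gives
\[
\mathbb H(X_A)\le\frac1d\sum_{v\in B}\mathbb H(X_{N(v)}).
\]
Combining this with the previous display, it suffices to prove, for each fixed $v\in B$, the local inequality
\[
\frac1d\,\mathbb H(X_{N(v)})+\mathbb H(X_v\mid X_{N(v)})\le\frac1d\log\homn(K_{d,d},G).
\]
Summing this over the $h/2$ vertices of $B$ yields $\log\homn(H,G)\le\frac{h}{2d}\log\homn(K_{d,d},G)$, which is the claim.

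To prove the local inequality, let $p(y)$ be the distribution of the $d$-tuple $X_{N(v)}$, with $y\in V(G)^d$. Let $c(y)$ be the number of common neighbours in $G$ of the entries of $y$. Given $X_{N(v)}=y$, the value $X_v$ must be such a common neighbour, so $\mathbb H(X_v\mid X_{N(v)}=y)\le\log c(y)$. Whenever $p(y)>0$ we have $c(y)\ge1$, since a homomorphism with $X_{N(v)}=y$ supplies a common neighbour. Hence the left-hand side is at most
\[
\frac1d\sum_y p(y)\log\frac{c(y)^d}{p(y)}\le\frac1d\log\sum_y c(y)^d,
\]
by Jensen's inequality (concavity of $\log$) over the support of $p$. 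Finally, $\sum_{y\in V(G)^d}c(y)^d=\homn(K_{d,d},G)$: one side of $K_{d,d}$ is mapped to $y$, and each of the $d$ vertices on the other side independently picks a common neighbour of $y$.

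I expect the only real obstacle to be setting up Shearer's lemma correctly. Its hypothesis requires every coordinate of $A$ to be covered at least $d$ times, and this is exactly where $d$-regularity of $H$ enters. The final step also needs the exact identity for $\homn(K_{d,d},G)$ rather than merely an upper bound, so that the Jensen bound closes with no loss.
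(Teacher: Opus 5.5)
Your proof is correct. It is essentially the original Galvin--Tetali entropy argument (Kahn's method): the chain rule over the bipartition, Shearer's lemma applied to the family $\{N(v):v\in B\}$, where $d$-regularity supplies the $d$-fold cover, and a Jensen step closed by the identity $\sum_{y\in V(G)^d}c(y)^d=\homn(K_{d,d},G)$. The paper itself gives no proof and simply cites the lemma.

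The paper only applies the lemma with $H=C_{2k}$ and $d=2$. In that case the spectral bound $\homn(C_{2k},G)=\sum_i\lambda_i^{2k}\le\bigl(\sum_i\lambda_i^4\bigr)^{k/2}=\homn(C_4,G)^{k/2}$ would already suffice. Your argument proves the general statement.
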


\begin{lemma}\label{lem:hom-C2k}
Let $k\ge2$ be an integer and let $p\ge n^{-1/2}$. If $G$ contains at most $O(p^4n^4)$ copies of $C_4$, then
\[
\homn(C_{2k},G)=O(p^{2k}n^{2k}).
\]
\end{lemma}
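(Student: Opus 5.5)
We combine Lemma~\ref{lem:hom-C4} with Lemma~\ref{lem:galvin-tetali}. First, the hypothesis gives at most $C_0p^4n^4$ copies of $C_4$ for some constant $C_0$. Writing this as $(Cpn)^4$ with $C=C_0^{1/4}$, Lemma~\ref{lem:hom-C4} (which uses $p\ge n^{-1/2}$ to absorb the degenerate term $3n^2\le 3p^4n^4$) yields
\[
\homn(C_4,G)\le(24C^4+3)p^4n^4=O(p^4n^4).
\]
This step is the only place where the passage from unlabelled copies to homomorphisms matters. In particular, it is where the lower bound $p\ge n^{-1/2}$ is needed to control degenerate closed walks, namely those with repeated vertices, which contribute $\sum_v d(v)^2$-type terms.

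Next observe that $C_{2k}$ is a $2$-regular bipartite graph on $h=2k$ vertices, and $K_{2,2}=C_4$. Applying Lemma~\ref{lem:galvin-tetali} with $d=2$ therefore gives
\[
\homn(C_{2k},G)\le \homn(C_4,G)^{2k/4}=\homn(C_4,G)^{k/2}.
\]
Substituting the bound from the first step gives
\[
\homn(C_{2k},G)\le\bigl((24C^4+3)p^4n^4\bigr)^{k/2}=(24C^4+3)^{k/2}p^{2k}n^{2k}.
\]
This is $O(p^{2k}n^{2k})$, with the implied constant depending only on $k$ and the constant in the $C_4$ hypothesis.

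There is no real obstacle here. The only care needed is that Lemma~\ref{lem:galvin-tetali} is applied with $d=2\ge1$, which is legitimate since $C_{2k}$ is bipartite exactly because its length is even, so $k\ge2$ is all that is required. Alternatively, if one wished to avoid citing Galvin--Tetali, one could use the spectral argument $\homn(C_{2k},G)=\sum_i\lambda_i^{2k}\le(\sum_i\lambda_i^4)^{k/2}=\homn(C_4,G)^{k/2}$. This holds because $\sum_i\lambda_i^{2k}\le(\max_i\lambda_i^4)^{(k-2)/2}\sum_i\lambda_i^4$ and $\max_i\lambda_i^4\le\sum_i\lambda_i^4$. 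It gives the same bound directly.
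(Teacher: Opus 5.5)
Your proof is correct and matches the paper's argument: Lemma~\ref{lem:hom-C4} turns the $C_4$-count hypothesis into $\homn(C_4,G)=O(p^4n^4)$, and Galvin--Tetali with $d=2$ gives $\homn(C_{2k},G)\le\homn(C_4,G)^{k/2}$. Your spectral alternative, $\sum_i\lambda_i^{2k}\le(\sum_i\lambda_i^4)^{k/2}$, is also valid and avoids the citation.
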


\begin{proof}
By Lemma~\ref{lem:hom-C4},
\[
\homn(C_4,G)=O(p^4n^4).
\]
Thus, by Lemma~\ref{lem:galvin-tetali},
\[
\homn(C_{2k},G)
\le
\homn(C_4,G)^{2k/4}
=
O(p^{2k}n^{2k}).
\]
\end{proof}

\begin{lemma}[Conlon, Fox, Sudakov and Zhao \cite{CFSZ21}]\label{lem:dense-pair}
Let $G$ be an $n$-vertex graph, and let
\[
V(G)=\bigcup_{i=1}^M V_i
\]
be a partition of the vertex set. Let $q>0$ and let $T$ denote the number of $4$-cycles in $G$.
Then the number of edges of $G$ lying between parts $V_i$ and $V_j$ with
\[
e(V_i,V_j)\ge q|V_i||V_j|
\]
(where $i=j$ is allowed) is
\[
O\!\left(\left(\frac1q+M^2+M^{1/4}T^{1/4}\right)n\right).
\]
\end{lemma}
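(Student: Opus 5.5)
We prove Lemma~\ref{lem:dense-pair} by splitting the dense pairs into two kinds: pairs where one part is small, and pairs where both parts are large. In the second kind we count $4$-cycles. Write $n_i=|V_i|$ and call a pair $(V_i,V_j)$ \emph{dense} if $e(V_i,V_j)\ge q n_in_j$. Since every edge lies in exactly one pair, it suffices to bound $\sum e(V_i,V_j)$ over dense pairs. We also use the crude fact that $e(X,Y)\le |X||Y|$.

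\emph{Small parts.} Fix a threshold $s>0$, to be chosen. If $\min(n_i,n_j)\le s$, then $e(V_i,V_j)\le \min(n_i,n_j)\max(n_i,n_j)\le s n$ trivially. That is too wasteful over $M^2$ pairs, so we do better. By Kővári--Sós--Turán, a bipartite graph between $X$ and $Y$ with $T_{XY}$ four-cycles satisfies
\[
e(X,Y)\lesssim |X||Y|^{1/2}+|Y| + T_{XY}^{1/4}|X|^{1/2}|Y|^{1/2}.
\]
More precisely, by Cauchy--Schwarz on codegrees, $\sum_{y,y'}\mathrm{codeg}(y,y')^2\gtrsim e^4/(|X|^2|Y|^2)$ up to the diagonal terms $e$ and $e^2/|Y|$. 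This gives
\[
e(X,Y)=O\bigl(|X|^{1/2}|Y| + |X| + T_{XY}^{1/4}(|X||Y|)^{1/2}\bigr),
\]
with $X$ and $Y$ swappable. Applying the symmetric version with the smaller side playing the role of $X$, the first two terms are at most $O(\min(n_i,n_j)^{1/2}\max(n_i,n_j)+n)$. For the diagonal pair $i=j$ we pass to the bipartite double cover, which costs only constants (a $C_4$ in the cover projects to a closed $4$-walk, and degenerate walks contribute $O(e+\sum d^2)$, which we absorb in the same way).

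\emph{Using density.} For a dense pair, $q n_in_j\le e(V_i,V_j)$. Combining this with the bound above yields three cases.
\begin{enumerate}[label=(\alph*)]
\item $qn_in_j\lesssim n_i^{1/2}n_j$, so $n_i\lesssim q^{-2}$. Then $e\le n_i n_j$, which is too large; instead we aim to use $e\lesssim n_i^{1/2}n_j$.
\item The linear term dominates, so $e\lesssim n$ for that pair, and summing over $M^2$ pairs gives $O(M^2n)$.
\item The $T$-term dominates: $e(V_i,V_j)\lesssim T_{ij}^{1/4}(n_in_j)^{1/2}$. Summing via Hölder, $\sum T_{ij}^{1/4}(n_in_j)^{1/2}\le (\sum T_{ij})^{1/4}(\sum (n_in_j)^{2/3})^{3/4}$. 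Since $\sum_{i,j}(n_in_j)^{2/3}\le M^{2/3}\cdot n^{4/3}$ by power means, this gives $T^{1/4}M^{1/2}n$, which is weaker than the claimed $M^{1/4}$. For the right power, note that $e\ge qn_in_j$ with $e\lesssim T_{ij}^{1/4}(n_in_j)^{1/2}$ gives $e\lesssim T_{ij}^{1/2}/q$, which mixes $q$ in. A better route is to sum over $j$ first for fixed $i$: the bipartite graph between $V_i$ and $V(G)$, restricted to dense pairs, has $\le T_i$ cycles, so $e_i\lesssim n_i^{1/2}n+n+T_i^{1/4}(n_in)^{1/2}$. Summing over $i$ with Hölder, $\sum_i T_i^{1/4}n_i^{1/2}\le T^{1/4}(\sum n_i^{2/3})^{3/4}\le T^{1/4}M^{1/4}n^{1/2}$, giving $M^{1/4}T^{1/4}n$ as desired.
\end{enumerate}

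The remaining term $\sum_i n_i^{1/2} n$ is the main obstacle. It must be controlled by $n/q$ plus $M^2n$ using density: the dense-pair edges at $V_i$ number at least $q n_i\cdot|N_i|$, where $N_i$ is the union of parts $V_j$ dense to $V_i$. So we should apply Kővári--Sós--Turán between $V_i$ and $N_i$ rather than all of $V(G)$. Writing $e_i\lesssim n_i^{1/2}|N_i|+|N_i|+\ldots$ together with $e_i\ge qn_i|N_i|$, the case where the first term dominates forces $n_i\lesssim q^{-2}$. In that case we would bound $e_i\le n_i^{1/2}|N_i|$ and need $\sum_i n_i^{1/2}|N_i|\lesssim n/q$. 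I expect to handle this by orienting each pair towards its smaller side and using $n_i^{1/2}\le q^{-1}$, but checking that the total is $O(n/q)$ rather than $O(Mn/q)$ is the delicate step. A fallback is to choose which side plays $X$ pairwise, so that the term $|X|^{1/2}|Y|$ is $\le \min^{1/2}\max$, and to use $e\le\min\cdot\max$ together with the density only when $\min\le q^{-2}$.
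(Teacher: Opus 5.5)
There is a genuine gap, and it is exactly the step you flag as delicate. The paper does not reprove this lemma; it quotes it from Conlon--Fox--Sudakov--Zhao, so your argument has to stand on its own.

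Aggregating the dense pairs at $V_i$ into $N_i=\bigcup_{j:\,(i,j)\ \mathrm{dense}}V_j$ is the right move, and it is what lets your H\"older step give $M^{1/4}T^{1/4}n$. But you then apply your K\H{o}v\'ari--S\'os--Tur\'an inequality with $X=V_i$, $Y=N_i$, i.e.\ with pairs of vertices in $N_i$ and codegrees in $V_i$. So the non-$T$ terms are $n_i^{1/2}|N_i|+n_i$. Density then only gives $n_i\lesssim q^{-2}$, so at best $e_i\lesssim |N_i|/q$, and $\sum_i|N_i|$ can be of order $Mn$. Orienting pairs toward the smaller part, or choosing $X$ pair by pair, does not rescue this. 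Each variant only yields $O(Mn/q)$ in this regime, which exceeds $n/q+M^2n$ by a factor of order $q^{-1/2}$ when $M=q^{-1/2}$.

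The missing idea is to use the other orientation, which your own remark that $X$ and $Y$ are swappable permits. Take $X=N_i$, $Y=V_i$: count pairs $x\ne x'$ in $V_i$ with codegree $c(x,x')=|N(x)\cap N(x')\cap N_i|$. Then $\sum_{x\ne x'}c(x,x')\ge e_i^2/|N_i|-e_i$, where $e_i=e(V_i,N_i)\ge qn_i|N_i|$.

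Define $8T_i=\sum_{x\ne x'\in V_i}c(x,x')\bigl(c(x,x')-1\bigr)$. Then $\sum_iT_i\le T$, for two reasons. Each tuple $(x,x',y,y')$ is a genuine $4$-cycle $xyx'y'$ even when $V_i\subseteq N_i$, so the diagonal pair needs no double cover and there are no degenerate walks to absorb. Also, a $4$-cycle has only two diagonals. Cauchy--Schwarz over the at most $n_i^2$ pairs then gives
\[
 e_i\lesssim |N_i|+n_i|N_i|^{1/2}+T_i^{1/4}(n_i|N_i|)^{1/2}.
\]
The three terms are handled as follows.
\begin{itemize}
\item If the middle term dominates, density gives $qn_i|N_i|\lesssim n_i|N_i|^{1/2}$, so $|N_i|\lesssim q^{-2}$ and $e_i\lesssim n_i/q$. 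This sums to $O(n/q)$, because the weight is now $n_i$ rather than $|N_i|$.
\item The linear term contributes at most $n$ per part, so $O(Mn)$ in total.
\item The last term is exactly your H\"older bound $\sum_iT_i^{1/4}n_i^{1/2}\le T^{1/4}M^{1/4}n^{1/2}$.
\end{itemize}
Since $\sum_ie_i$ counts every dense-pair edge twice, this proves the lemma, even with $Mn$ in place of $M^2n$.
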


\section{A sparse counting lemma for $C_{2k+1}$}\label{sec:counting}

Let $V$ be a probability space. Given a symmetric function
$f:V\times V\to[0,\infty)$, recall that the homomorphism density of $H$ in $f$ is
\[
t(H,f)
=
\int_{V^{V(H)}}\prod_{uv\in E(H)}f(x_u,x_v)\prod_{v\in V(H)}dx_v.
\]

Our counting lemma is as follows, extending Conlon, Fox, Sudakov and Zhao
\cite[Theorem 3.1]{CFSZ21}.

\begin{lemma}\label{lem:counting}
Let $k\in\Z$, $k\ge2$, $0<\epsilon<1$ and $C\ge1$. Let $V$ be a probability space and let
$f:V\times V\to[0,\infty)$ and $g:V\times V\to[0,1]$ be measurable symmetric functions
satisfying
\[
t(C_{2k},f)\le C
\qquad\text{and}\qquad
\|f-g\|_\square\le \epsilon^2.
\]
Then
\[
t(C_{2k+1},f)
\ge
t(C_{2k+1},g)-9Ck\,\epsilon^{1/2^{k+1}}.
\]
\end{lemma}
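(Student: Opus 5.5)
We would follow the telescoping strategy of \cite[Theorem 3.1]{CFSZ21}. Label the vertices of $C_{2k+1}$ as $x_0,\dots,x_{2k}$ and its edges $e_1,\dots,e_{2k+1}$. Replace the copies of $f$ by copies of $g$ one edge at a time, and write
\[
t(C_{2k+1},g)-t(C_{2k+1},f)=\sum_{j=1}^{2k+1}\Delta_j .
\]
Here $\Delta_j$ is the integral in which the edges before $e_j$ carry $g$, the edge $e_j$ carries $g-f$, and the edges after $e_j$ carry $f$. It suffices to prove $|\Delta_j|\le 4C\epsilon^{1/2^{k+1}}$ for each $j$, say, since $(2k+1)\cdot 4C\le 9Ck$ for $k\ge2$.

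\textbf{Step 1: isolate the difference edge.} Fix $j$ and take $e_j=x_0x_{2k}$ after relabelling. The remaining $2k$ edges form a path $P$ from $x_0$ to $x_{2k}$, and we write $\Delta_j=\int (g-f)(x_0,x_{2k})\,W(x_0,x_{2k})$, where $W$ is the path kernel. Apply Cauchy--Schwarz with $x_k$, the middle vertex of $P$, as the base. The path splits into two halves of length $k$ with kernels $U(x_0,x_k)$ and $U'(x_k,x_{2k})$, each a product of $k$ kernels that are either $f$ or $g$. We obtain
\[
|\Delta_j|^2\le \Bigl(\int_{x_k}\Bigl(\int U(x_0,x_k)(g-f)(x_0,x_{2k})U'(x_k,x_{2k})\Bigr)^2\Bigr)\cdot 1 .
\]
Expanding the square doubles the structure and yields a ``theta-like'' graph. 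We then apply Cauchy--Schwarz repeatedly, $k+1$ times in all, each time doubling along a vertex so as to symmetrize. The goal is to bound $|\Delta_j|^{2^{k+1}}$ by a product of two kinds of factors. The first kind are densities of even cycles or blow-ups of them in the mixed family $\{f,g\}$. The second is a single factor of the form $t(C_4\text{-type},\,g-f)$ or $\|g-f\|_\square$-controlled quantities.

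\textbf{Step 2: control the factors.} For the mixed even-cycle factors, use $0\le g\le1$ together with the generalized Hölder / Galvin--Tetali type inequality (cf.\ Lemma~\ref{lem:galvin-tetali}). This bounds every mixed $C_{2k}$ density by
\[
\max\bigl(t(C_{2k},f),t(C_{2k},g)\bigr)\le \max(C,1)=C,
\]
since each $g$ factor is at most $1$. Terms containing $g-f$ in an even cycle are bounded by $t(C_{2k},|f|+|g|)$-type quantities. The final difference factor will be $\int (g-f)(x,y)a(x)b(y)$ with $0\le a,b\le1$; we arrange this because the unbounded $f$-parts have been moved into the other Hölder factors. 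It is therefore at most $4\|f-g\|_\square\le 4\epsilon^2$. Taking $2^{k+1}$-th roots gives $|\Delta_j|\lesssim C\,\epsilon^{2/2^{k+1}}$, which is even better than needed. The stated $\epsilon^{1/2^{k+1}}$ leaves slack of one square root, to absorb a lossy step.

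\textbf{Main obstacle.} The hard part is arranging the Cauchy--Schwarz steps so that the factor containing $g-f$ has only bounded (i.e.\ $g$-type, or $[0,1]$) weights adjacent to it. The cut norm controls $g-f$ only against bounded test functions, while $f$ is unbounded and controlled only through $t(C_{2k},f)$. My first attempt is to order the replacement so that, in $\Delta_j$, the neighbours of $e_j$ on one side all carry $g$. If that fails for the edges near the end of the telescoping, I would use the standard trick from \cite{CFSZ21}. That trick bounds $\int (g-f)\,u(x)v(y)$ for a general nonnegative $u$ by splitting $u=u\1_{u\le\lambda}+u\1_{u>\lambda}$. The bounded part is handled by the cut norm with loss $\lambda\epsilon^2$. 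The tail is handled via the $C_{2k}$ bound, giving loss $C/\lambda$. Optimizing $\lambda=\epsilon^{-1}$ costs exactly the square root of slack noted above, and produces the claimed exponent $1/2^{k+1}$.
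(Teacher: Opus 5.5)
There is a genuine gap: the estimate your telescoping is built to prove is false. Since $\sum_j\Delta_j=t(C_{2k+1},g)-t(C_{2k+1},f)$ exactly, bounding every $|\Delta_j|$ by $4C\epsilon^{1/2^{k+1}}$ would give the two-sided bound $|t(C_{2k+1},f)-t(C_{2k+1},g)|\le 9Ck\epsilon^{1/2^{k+1}}$. The lemma can only hold one-sidedly. For a counterexample, take $V=[n]$ with uniform measure, $p=\epsilon^2$, and a set $S$ of size $s=pn$. Let $f(x,y)=p^{-1}$ for distinct $x,y\in S$ and $f=0$ otherwise, and let $g=0$. Then $t(C_{2k},f)\le1$ and $\|f-g\|_\square=\E f<\epsilon^2$, yet $t(C_{2k+1},f)=1-o(1)$ as $n\to\infty$. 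So the two-sided bound fails as soon as $9k\epsilon^{1/2^{k+1}}<1$.

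Concretely, your first term $\Delta_1$, where the difference edge is flanked only by $f$'s, equals $-t(C_{2k+1},f)\approx-1$. (It is the start of the telescoping, not the end, that is problematic.) Both of your tools break on this term.
\begin{itemize}
\item Repeated Cauchy--Schwarz produces a $C_4$-type (box-norm) factor of $g-f$, which the cut norm does not control when $f$ is unbounded. Here $t(C_4,g-f)\approx1$, and for $k\ge3$ the hypotheses do not even bound $t(C_4,f)$.
\item In your splitting trick, only the $g$-part of the tail is $O(C/\lambda)$. The $f$-part, $\int f\,u\1_{u>\lambda}v$, is exactly the contribution of dense spikes such as this clique, and it can be of order $1$.
\end{itemize}
For the same reason, the claimed rate $\epsilon^{2/2^{k+1}}$ is not available.

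The missing idea is to use $f\ge0$ and prove only a lower bound, so that the unbounded parts of $f$ are discarded rather than estimated. The paper proceeds as follows.
\begin{itemize}
\item It splits the cycle into the two $k$-edge arcs from $x_1$ and the edge $x_{k+1}x_{k+2}$.
\item On the $j$th kernel of each arc, it zeroes out the rows of vertices whose degree in that kernel exceeds $\epsilon^{-2^{-j}}$. An induction with Lemma~\ref{lem:bounded-cut} then gives $\|H-H_g\|_\square,\|J-J_g\|_\square\le3k\epsilon^{2^{-k}}$ for the clipped arc convolutions.
\item It truncates $H$ and $J$ at heights $A=\epsilon^{-2^{-(k+1)}}$ and $B=A^2$. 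This costs only $C/A$ and $C/B$ in cut norm, because $t(C_{2k},f)$ bounds $\|H\|_2^2$ and $\|J\|_2^2$.
\item Clipping and truncation only decrease a nonnegative integrand, so $t(C_{2k+1},f)\ge\E\,H_{\le A}(x_1,x_{k+1})J_{\le B}(x_1,x_{k+2})f(x_{k+1},x_{k+2})$.
\item It then swaps $f\to g$, $J_{\le B}\to J_g$ and $H_{\le A}\to H_g$ via Lemma~\ref{lem:triangle-counting}.
\end{itemize}
In each swap, the two kernels flanking the difference are bounded by $A$, $B$ or $1$. That is exactly the situation you were trying to arrange, and it cannot be arranged in a symmetric two-sided telescoping.
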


We will prove a multipartite version. Let
$V_1,\ldots,V_{2k+1}$ be probability spaces, with indices considered modulo $2k+1$.
For all distinct $i,j\in\Z/(2k+1)$, write $f_{ij}=f_{i,j}$ for a measurable function on
$V_i\times V_j$ and assume $f_{ij}(x_i,x_j)=f_{ji}(x_j,x_i)$.
The same transposition convention is used for $g_{ij}$.

Given $f_{ij}:V_i\times V_j\to\R$, write
$f_{12}\circ f_{23}\circ\cdots\circ f_{k,k+1}:V_1\times V_{k+1}\to\R$ for
\[
(f_{12}\circ\cdots\circ f_{k,k+1})(x_1,x_{k+1})
=
\E_{x_i\in V_i,\ i=2,\ldots,k}
\prod_{i=1}^k f_{i,i+1}(x_i,x_{i+1}).
\]

\begin{lemma}\label{lem:counting-multipartite}
Let $k\in\Z$, $k\ge2$, $0<\epsilon<1$ and $C\ge1$. Let
$V_1,\ldots,V_{2k+1}$ be probability spaces. For each $i$ modulo $2k+1$, let
\[
f_{i,i+1}:V_i\times V_{i+1}\to[0,\infty),
\qquad
g_{i,i+1}:V_i\times V_{i+1}\to[0,1]
\]
satisfy
\[
\|f_{i,i+1}-g_{i,i+1}\|_\square\le \epsilon^2,
\]
and
\[
\|f_{i,i+1}\circ\cdots\circ f_{i+(k-1),i+k}\|_2^2\le C.
\]
Then
\[
\E_{x_1,\ldots,x_{2k+1}}
\prod_{i=1}^{2k+1}f_{i,i+1}(x_i,x_{i+1})
\ge
\E_{x_1,\ldots,x_{2k+1}}
\prod_{i=1}^{2k+1}g_{i,i+1}(x_i,x_{i+1})
-
9Ck\,\epsilon^{1/2^{k+1}}.
\]
\end{lemma}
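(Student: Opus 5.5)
The plan is a telescoping (hybrid) argument, as in \cite[Theorem~3.1]{CFSZ21}. Write $\Lambda(h_1,\ldots,h_{2k+1})=\E\prod_i h_{i,i+1}(x_i,x_{i+1})$. Replace the edges one at a time, in the cyclic order $1,2,\ldots,2k+1$:
\[
\Lambda(f)-\Lambda(g)=\sum_{j=1}^{2k+1}\Lambda(g_{12},\ldots,g_{j-1,j},f_{j,j+1}-g_{j,j+1},f_{j+1,j+2},\ldots,f_{2k+1,1}).
\]
Because we only need a lower bound on $\Lambda(f)$, it suffices to show that each of the $2k+1$ summands is at least $-9k C\epsilon^{1/2^{k+1}}/(2k+1)$ (or simply bounded in absolute value by something slightly weaker after the constant bookkeeping). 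In the $j$-th summand put $F=f_{j,j+1}-g_{j,j+1}$. The remaining $2k$ edges form a path $x_{j+1},x_{j+2},\ldots,x_{j+2k+1}=x_j$. Split this path at its middle vertex $y=x_{j+k+1}$ into two paths of $k$ edges each, giving kernels $A(x_{j+1},y)$ and $B(y,x_j)$. Each is a composition of $k$ consecutive edges, some of them $g$'s (bounded by $1$) and the rest $f$'s. The summand then equals $\E_{y}\E_{x,x'}F(x',x)A(x,y)B(y,x')$.

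The core estimate is a bound $|\E F(x',x)A(x,y)B(y,x')|\lesssim \sqrt C\,\epsilon^{1/2^{k}}$-type, proved by $k$ rounds of Cauchy--Schwarz. In each round, Cauchy--Schwarz peels off one of the two path factors. This doubles the remaining part, and by the pointwise comparison $0\le g\le1$ together with $f\ge0$, every $f$-path of length $k$ that gets squared is controlled by the hypothesis $\|f_{i,i+1}\circ\cdots\circ f_{i+k-1,i+k}\|_2^2\le C$. A $g$-edge is simply bounded by $1$ (dropping it only increases a nonnegative integrand once we are at the squared stage). After the rounds are complete, what remains is an expression of the form $\E F(x',x)a(x)b(x')$ or a Gowers-type box average of $F$ against functions taking values in $[0,1]$. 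Such an expression is bounded by a power of $\|F\|_\square\le\epsilon^2$, possibly after one extra Cauchy--Schwarz, which costs a final square root. Each square root halves the exponent, and starting from $\epsilon^2$ this gives $\epsilon^{2/2^{k+2}}=\epsilon^{1/2^{k+1}}$, with a constant of order $C$ coming from the $C$-bounded squared factors. Summing over the $2k+1$ summands then gives the stated error $9Ck\epsilon^{1/2^{k+1}}$.

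I expect the main obstacle to be the mixed products: the paths $A,B$ contain both $f$- and $g$-edges, and $F$ itself is unbounded. The hypothesis only controls pure $f$-compositions of $k$ consecutive edges, and only for specific starting indices. My first attempt will exploit the freedom in the order of replacement and in where the path is cut. Because the replaced edges $g_{12},\ldots,g_{j-1,j}$ are contiguous, I can cut at the middle vertex and use Cauchy--Schwarz in the variable on the side of the $g$-edges. Bounding those $g$'s by $1$ then leaves a pure $f$-path of exactly $k$ consecutive edges, to which the $\|\cdot\|_2^2\le C$ hypothesis applies. For $F=f-g$ I will split $F$ into its $f$ and $g$ parts wherever a squared quantity appears, using $(f-g)^2\le f^2+g^2$-type bounds only inside already nonnegative integrands. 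If the pure-path structure fails for some $j$, the fallback is to truncate $A$ and $B$ at a level $\lambda$. The bounded part is then handled by the cut norm at cost $\lambda^2\epsilon^2$, and the tail by Markov's inequality together with the $L^2$ bound $C$, with $\lambda$ optimized to recover an error of the form $\epsilon^{1/2^{k+1}}$.
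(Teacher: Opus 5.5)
Your proposal has a genuine gap. The telescoping identity forces you to estimate \emph{signed} hybrid terms $\Lambda(g,\ldots,g,F,f,\ldots,f)$, where both $F=f_{j,j+1}-g_{j,j+1}$ and the remaining $f$-factors are unbounded. You note that a lower bound would suffice, but the Cauchy--Schwarz/box-norm mechanism you describe is blind to signs. It would therefore prove the two-sided bound $|\E F(x',x)A(x,y)B(y,x')|\lesssim\sqrt C\,\epsilon^{1/2^k}$, and that bound is false here.

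A counterexample: take $k=2$ and $p=n^{-1/2}$. Let $G$ be $G(n,p)$ plus one vertex joined to all others, and set every $f_{i,i+1}=p^{-1}\1_G$ and every $g_{i,i+1}\equiv1$. Then $t(C_4,f)=O(1)$ and, with high probability, $\|f-g\|_\square=O(n^{-1/4})$. But the hub contributes about $1$ to $t(C_5,f)$ in each of its five positions, so $t(C_5,f)-t(C_5,g)\ge5-o(1)$, and some summand of your sum is at least about $1$.

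Three specific steps break.
\begin{enumerate}
\item A box-type average of $F$ is controlled by $\|F\|_\square$ only when $F$ is bounded. Already for $f=p^{-1}\1_{G(n,p)}$ one has $t(C_4,f-1)\to2$ while $\|f-1\|_\square\to0$.
\item In your truncation fallback, Markov's inequality controls $\|A-A_{\le\lambda}\|_1$. The tail you actually face is $\E_{x,y}(A-A_{\le\lambda})(x,y)\,b(y)$ with $b(y)=\E_{x'}B(y,x')$ unbounded. At the hub both $A(\cdot,y)$ and $b(y)$ are of order $\sqrt n$, so this tail is of order $1$.
\item For $j\notin\{1,k+1,2k+1\}$, one half of the remaining path is mixed, e.g.\ $f_{51}\circ g_{12}$ when $k=2$, $j=2$. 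Bounding $g_{12}\le1$ leaves the degree function of $f_{51}$, not a $k$-edge $f$-path, and the hypotheses give no $L^2$ control of it.
\end{enumerate}

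The missing idea is to use $f\ge0$ once, on the whole $f$-cycle, after discarding unbounded mass. Cut-norm closeness must then be transported along each $k$-edge path at the level of convolutions, not edge by edge. The paper does exactly this:
\begin{itemize}
\item It cuts the cycle into the paths $1,\ldots,k+1$ and $1,2k+1,\ldots,k+2$, plus the edge $(k+1,k+2)$.
\item It removes the rows of the $j$th kernel of each path whose average exceeds $\epsilon^{-2^{-j}}$. Lemma~\ref{lem:bounded-cut} then yields $\|H-H_g\|_\square,\|J-J_g\|_\square\le3k\epsilon^{2^{-k}}$.
\item Clipping only decreases kernels, so $\|H\|_2^2,\|J\|_2^2\le C$. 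Truncating at $\epsilon^{-a}$ and $\epsilon^{-2a}$, with $a=2^{-(k+1)}$, gives $\|H_{\le A}-H_g\|_\square\le4kC\epsilon^{a}$ and $\|J_{\le B}-J_g\|_\square\le4kC\epsilon^{2a}$.
\item Nonnegativity gives $\Lambda(f)\ge\E H_{\le A}J_{\le B}f_{k+1,k+2}$.
\item Three swaps by Lemma~\ref{lem:triangle-counting} finish the proof, each with the two other factors bounded in $L^\infty$.
\end{itemize}
Throughout, a signed difference is only ever paired with $L^\infty$-bounded factors.

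A single hybrid can in fact be bounded below. Fix the middle vertex $y$, which writes the hybrid as an average over $y$ of $\E_{x,x'}F(x',x)\phi_y(x)\psi_y(x')$ with $\phi_y=A(\cdot,y)\ge0$ and $\psi_y=B(y,\cdot)\ge0$. Then truncate $\phi_y$ and $\psi_y$ at levels proportional to their own $L^2$ norms. This still needs $L^2$ bounds on the mixed halves, which is the third failure above.
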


Note that the cut norm \eqref{eq:cutnorm} can be rephrased as
\begin{equation}\label{eq:cutnorm-equivalent}
\|f\|_\square
=
\sup_{\lambda_i:V_i\to[0,1],\,i=1,2}
\left|
\E_{x\in V_1,y\in V_2}
f(x,y)\lambda_1(x)\lambda_2(y)
\right|.
\end{equation}

We first record two basic lemmas from Conlon, Fox, Sudakov and Zhao \cite{CFSZ21}.

\begin{lemma}\label{lem:bounded-cut}
Let $f_{12}:V_1\times V_2\to\R$ and $f_{23}:V_2\times V_3\to[0,\infty)$ with
\[
\E_{x_3\in V_3} f_{23}(x_2,x_3)\le M
\]
for all $x_2\in V_2$. Then
\[
\|f_{12}\circ f_{23}\|_\square\le M\|f_{12}\|_\square.
\]
\end{lemma}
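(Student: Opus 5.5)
The plan is to use the equivalent form \eqref{eq:cutnorm-equivalent} of the cut norm and reduce to the defining supremum for $f_{12}$. By \eqref{eq:cutnorm-equivalent}, it suffices to show that for all $\lambda_1:V_1\to[0,1]$ and $\lambda_3:V_3\to[0,1]$,
\[
\Bigl|\E_{x_1,x_3}(f_{12}\circ f_{23})(x_1,x_3)\lambda_1(x_1)\lambda_3(x_3)\Bigr|\le M\|f_{12}\|_\square .
\]
Expanding the composition, the left side equals $\bigl|\E_{x_1,x_2}f_{12}(x_1,x_2)\lambda_1(x_1)\mu(x_2)\bigr|$, where
\[
\mu(x_2)\coloneqq \E_{x_3}f_{23}(x_2,x_3)\lambda_3(x_3).
\]
This step only requires Fubini, which is legitimate once we note integrability, or we work with finite spaces as in all applications.

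The key observation is that $0\le\mu(x_2)\le \E_{x_3}f_{23}(x_2,x_3)\le M$. This holds because $f_{23}\ge0$ and $0\le\lambda_3\le1$, and it is exactly where nonnegativity of $f_{23}$ and the hypothesis are used. Hence $\mu/M$ takes values in $[0,1]$. If $M=0$, then $\mu\equiv0$ and there is nothing to prove.

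Applying \eqref{eq:cutnorm-equivalent} to $f_{12}$ with test functions $\lambda_1$ and $\mu/M$ then gives
\[
\bigl|\E f_{12}\lambda_1\mu\bigr|=M\bigl|\E f_{12}\lambda_1(\mu/M)\bigr|\le M\|f_{12}\|_\square .
\]
Taking the supremum over $\lambda_1$ and $\lambda_3$ finishes the argument. The equivalence \eqref{eq:cutnorm-equivalent} itself follows because the expression is bilinear in $(\lambda_1,\lambda_2)$, so the extreme values over $[0,1]$-valued functions are attained at indicator functions; we may take this as given, since the paper records it.

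The only mildly delicate point is justifying the interchange of expectations when $f_{12}$ is unbounded and signed. I would handle it by assuming $\E|f_{12}|\cdot M<\infty$, or by truncation and monotone convergence. In the paper's finite setting this is not an issue.
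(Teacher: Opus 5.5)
Your argument is correct and is the standard proof of this lemma; the paper gives no proof of its own and cites \cite{CFSZ21}. You fold $\lambda_3$ and $f_{23}$ into the weight $\mu(x_2)=\E_{x_3}f_{23}(x_2,x_3)\lambda_3(x_3)\in[0,M]$ and apply \eqref{eq:cutnorm-equivalent} to $f_{12}$ with $\lambda_1$ and $\mu/M$. On integrability, Tonelli gives $\E|f_{12}|f_{23}\lambda_1\lambda_3\le M\,\E|f_{12}|$, so $f_{12}\in L^1$ already justifies Fubini and no truncation is needed.
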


\begin{lemma}[{\cite[Lemma 3.4]{CFSZ21}}]\label{lem:triangle-counting}
Let $f_{ij}:V_i\times V_j\to\R$ for $ij\in\{12,13,23\}$ and assume that $f_{12}$ and
$f_{13}$ take only nonnegative values. Then
\[
\left|\E_{x_i\in V_i,\ i=1,2,3}
f_{12}(x_1,x_2)f_{13}(x_1,x_3)f_{23}(x_2,x_3)\right|
\le
\|f_{12}\|_\infty\|f_{13}\|_\infty\|f_{23}\|_\square.
\]
\end{lemma}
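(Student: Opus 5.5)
We would prove the bound by conditioning on the vertex $x_1$, which is shared by the two nonnegative functions $f_{12}$ and $f_{13}$, and then using the functional form \eqref{eq:cutnorm-equivalent} of the cut norm for $f_{23}$.

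First we dispose of degenerate cases. If $\|f_{12}\|_\infty=\infty$ or $\|f_{13}\|_\infty=\infty$, the right-hand side is infinite and there is nothing to prove. If either norm is $0$, then the integrand vanishes almost everywhere and both sides are $0$. So we may assume $0<a:=\|f_{12}\|_\infty<\infty$ and $0<b:=\|f_{13}\|_\infty<\infty$. After modifying $f_{12}$ and $f_{13}$ on null sets, which does not change the expectation, we may assume $0\le f_{12}\le a$ and $0\le f_{13}\le b$ pointwise. The integrand is then bounded in absolute value by $ab|f_{23}|$. We implicitly assume $f_{23}$ is integrable, as it must be for $\|f_{23}\|_\square$ to be meaningful. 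Hence Fubini's theorem applies, and we can write the expectation as $\E_{x_1}\Phi(x_1)$, where
\[
\Phi(x_1)=\E_{x_2\in V_2,x_3\in V_3}f_{12}(x_1,x_2)f_{13}(x_1,x_3)f_{23}(x_2,x_3).
\]

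The key step is this. For each fixed $x_1$, set
\[
\lambda_2(x_2)=f_{12}(x_1,x_2)/a,\qquad \lambda_3(x_3)=f_{13}(x_1,x_3)/b .
\]
These are measurable functions with values in $[0,1]$, by the nonnegativity hypothesis and the choice of $a,b$. Then
\[
\Phi(x_1)=ab\,\E_{x_2,x_3}f_{23}(x_2,x_3)\lambda_2(x_2)\lambda_3(x_3),
\]
and \eqref{eq:cutnorm-equivalent} gives $|\Phi(x_1)|\le ab\|f_{23}\|_\square$. Taking expectation over $x_1$ and using $|\E\Phi|\le\E|\Phi|$ yields the claim.

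There is no real obstacle here. The only points requiring care are the measure-theoretic ones: justifying Fubini, and the a.e.\ modification of $f_{12}$ and $f_{13}$ so that the $[0,1]$-valued weights are genuine pointwise bounds. We also rely on the equivalence \eqref{eq:cutnorm-equivalent}, which the paper has already recorded; if it were not available, it would follow because the bilinear form in $(\lambda_1,\lambda_2)$ is affine in each variable separately, so its supremum over $[0,1]$-valued functions is attained at indicator functions.
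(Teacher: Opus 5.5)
Your proof is correct and follows the standard argument. The paper does not reprove this lemma but cites \cite[Lemma 3.4]{CFSZ21}, whose proof likewise fixes $x_1$, divides the sections $f_{12}(x_1,\cdot)$ and $f_{13}(x_1,\cdot)$ by their sup norms to get $[0,1]$-valued weights, applies the weighted form \eqref{eq:cutnorm-equivalent} of the cut norm, and then averages over $x_1$; your measure-theoretic remarks (a.e.\ truncation, and Fubini via integrability of $f_{23}$) are extra care rather than a change of method.
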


\begin{proof}[Proof of Lemma~\ref{lem:counting-multipartite}]
We first consider any oriented $k$-edge path, and number its edges
locally by $j=1,\ldots,k$. Write $u_j,v_j$ for the $f$- and
$g$-kernels on its $j$th edge, respectively, with the arguments
ordered along the path. Thus $u_j\ge0$, $0\le v_j\le1$, and
$\lVert u_j-v_j\rVert_\square\le\epsilon^2$.
Put $a_j=2^{-j}$, and define
\[
 S_j=\{x:\mathbb E_yu_j(x,y)\le\epsilon^{-a_j}\},
 \qquad \widehat u_j(x,y)=u_j(x,y)1_{S_j}(x).
\]
Since $\mathbb E u_j\le\mathbb E v_j+\epsilon^2\le2$,
the complement of $S_j$ has measure at most $2\epsilon^{a_j}$.
Therefore
\[
 \lVert\widehat u_j-v_j\rVert_\square
 \le\epsilon^2+2\epsilon^{a_j}\le3\epsilon^{a_j},
 \qquad
 \sup_x\mathbb E_y\widehat u_j(x,y)\le\epsilon^{-a_j}.
\]
For $1\le j\le k$, let
\[
 D_j=\lVert\widehat u_1\circ\cdots\circ\widehat u_j
                -v_1\circ\cdots\circ v_j\rVert_\square.
\]
Lemma~\ref{lem:bounded-cut}, also applied after transposing the kernels, gives
\[
 D_1\le3\epsilon^{a_1},\qquad
 D_j\le\epsilon^{-a_j}D_{j-1}+3\epsilon^{a_j}
       \le3j\epsilon^{a_j}.
\]
Here the last inequality follows inductively from
$a_{j-1}=2a_j$; the transposed application is valid because
every convolution of the $v_j$ takes values in $[0,1]$.

Apply this construction separately to the two paths
\[
 1,2,\ldots,k+1
 \qquad\text{and}\qquad
 1,2k+1,2k,\ldots,k+2.
\]
Let $H,J$ be the resulting convolutions of the clipped $f$-kernels,
and let $H_g,J_g$ be the corresponding convolutions of the
$g$-kernels. Thus
\[
 \lVert H-H_g\rVert_\square,
 \lVert J-J_g\rVert_\square\le3k\epsilon^{2^{-k}}.
\]
The clipped kernels are pointwise bounded by the original kernels.
The path hypotheses, with transposition for the second path,
therefore imply
\[
 \lVert H\rVert_2^2,\ \lVert J\rVert_2^2\le C.
\]
For a nonnegative function $h$ with $\lVert h\rVert_2^2\le C$
and any $R>0$, write $h_{\le R}=h1_{h\le R}$. Then
\[
 \lVert h-h_{\le R}\rVert_\square
 \le\mathbb E[h1_{h>R}]\le C/R.
\]
Set
\[
 a=2^{-(k+1)},\quad b=2^{-k}=2a,\quad
 A=\epsilon^{-a},\quad B=\epsilon^{-b}.
\]
Using $C\ge1$ and $k\ge2$, we obtain
\[
 \lVert H_{\le A}-H_g\rVert_\square\le4kC\epsilon^a,
 \qquad
 \lVert J_{\le B}-J_g\rVert_\square\le4kC\epsilon^b.
\]
All functions in the original cycle product are nonnegative.
Consequently its expectation is at least
\[
 \mathbb E_{x_1,x_{k+1},x_{k+2}}
 H_{\le A}(x_1,x_{k+1})J_{\le B}(x_1,x_{k+2})
 f_{k+1,k+2}(x_{k+1},x_{k+2}).
\]
In this expression replace, successively,
$f_{k+1,k+2}$ by $g_{k+1,k+2}$,
$J_{\le B}$ by $J_g$, and $H_{\le A}$ by $H_g$.
By Lemma~\ref{lem:triangle-counting} the sum of the absolute errors is at most
\[
 AB\epsilon^2+A\cdot4kC\epsilon^b+4kC\epsilon^a
 \le(8kC+1)\epsilon^a\le9kC\epsilon^a.
\]
Indeed, $AB\epsilon^2=\epsilon^{2-3a}\le\epsilon^a$.
The final expression is the $g$-cycle density, proving the lemma.
\end{proof}

\begin{proof}[Proof of Lemma~\ref{lem:counting}]
Apply Lemma~\ref{lem:counting-multipartite} with every probability space
equal to $V$ and every edge kernel equal to $f$ or $g$, respectively.
Symmetry gives $\|f\circ\cdots\circ f\|_2^2=t(C_{2k},f)$ for the
$k$-edge convolution, so all its hypotheses hold.
\end{proof}

\section{Proofs of Theorem~\ref{thm:pascadi} and Theorem~\ref{thm:C4-removal}}\label{sec:proofs}

We use the following graph removal lemma \cite{CFSZ21}, which allows both vertex and edge weights.

\begin{theorem}[Weighted graph removal lemma, dense setting]\label{thm:weighted-removal}
For every graph $F$ and $\epsilon>0$, there exists $\delta>0$ such that for a probability space
$V$ and measurable symmetric $g:V\times V\to[0,1]$ with $t(F,g)\le\delta$, there exists a
measurable symmetric subset $A\subseteq V\times V$ such that
\[
t(F,g\1_A)=0
\qquad\text{and}\qquad
\|g-g\1_A\|_1\le\epsilon.
\]
\end{theorem}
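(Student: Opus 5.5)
The plan is the standard regularity-plus-counting argument for the dense removal lemma, run on a general probability space so that vertex weights (including atoms) are allowed. Fix $F$ with $v$ vertices and $e$ edges, and a small parameter $\eta=\eta(\epsilon,F)$. First I apply a Szemer\'edi-type regularity lemma for kernels, in the form of Lov\'asz--Szegedy's strong regularity or an iterated Frieze--Kannan argument as in Theorem~\ref{thm:weak-sparse-reg} with $\E(f)\le1$. It should produce a partition $\mathcal P=\{V_1,\dots,V_K\}$ of $V$, with $K\le K_0(\eta,F)$, such that all but $\eta$ of the measure of $V\times V$ is covered by blocks $V_i\times V_j$ (diagonal included) that are $\gamma$-regular. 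Here $\gamma=\gamma(\eta,K)$ is chosen much smaller than $(\eta/K)^{v}$, and $\gamma$-regular means that for all $S\subseteq V_i$, $T\subseteq V_j$ the average of $g-g_{\mathcal P}$ over $S\times T$ is at most $\gamma\mu(V_i)\mu(V_j)$ in absolute value. This is the usual energy-increment argument; measurability and atoms cause no trouble because everything is phrased with measurable sets and conditional expectations.

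Next I define the cleaned set $A$ as the union of the blocks $V_i\times V_j$ satisfying three conditions: the block is $\gamma$-regular, $\mu(V_i),\mu(V_j)\ge\eta/K$, and $g_{\mathcal P}\ge\eta$ on the block. $A$ is symmetric because regularity and density are symmetric conditions. The deleted mass $\|g-g\1_A\|_1$ is at most the sum of three contributions: irregular blocks, at most $\eta$; blocks meeting small parts, at most $2K\cdot\eta/K=2\eta$; and sparse blocks, at most $\eta$ because $g\le1$. Taking $\eta=\epsilon/4$ gives $\|g-g\1_A\|_1\le\epsilon$.

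For the conclusion $t(F,g\1_A)=0$, I expand $t(F,g\1_A)$ as a sum over maps $\phi:V(F)\to[K]$ recording which part each vertex lands in. The summand for $\phi$ vanishes identically unless every edge $uw$ of $F$ has $V_{\phi(u)}\times V_{\phi(w)}\subseteq A$. Suppose such a $\phi$ exists; repeated parts are allowed, and they are covered by diagonal regularity. Then I invoke the standard counting lemma for regular pairs, proved by a telescoping swap of one edge at a time using the cut-norm bound on each block, with $g\le1$ controlling the other factors as in Lemma~\ref{lem:triangle-counting}. It gives
\[
t(F,g)\ \ge\ \prod_{u}\mu(V_{\phi(u)})\Bigl(\eta^{e}-e\gamma\Bigr)\ \ge\ (\eta/K)^{v}\eta^{e}/2 .
\]
Setting $\delta$ below this quantity, using $K\le K_0$, yields a contradiction. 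Hence no such $\phi$ exists and $t(F,g\1_A)=0$ exactly.

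The main obstacle is the regularity input. The counting step needs each dense block to be regular on subsets relative to its own size, not merely the global cut-norm bound $\|g-g_{\mathcal P}\|_\square$. The global bound is too weak, because parts of measure about $\eta/K$ would amplify the error by $K^2$. I would first try deriving block-wise regularity from the strong regularity lemma: take $\|g-g'\|_1\le\eta^2$ and $\|g'-g'_{\mathcal P}\|_\square\le\gamma/K^{2}$, then apply Markov's inequality blockwise to the $L^1$ error to discard a small-measure set of bad blocks. Failing that, I would rerun Szemer\'edi's energy-increment proof directly for kernels on probability spaces. Diagonal blocks are handled in the same way as off-diagonal ones, since $g$ is symmetric.
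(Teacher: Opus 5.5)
The paper does not prove this statement; it imports it as a black box from Conlon--Fox--Sudakov--Zhao and uses it once, on the finite quotient space whose points are the parts of $\mathcal P$ with weights $|V_i|/n$. Your proposal is the standard regularity--cleaning--counting proof behind that citation, and it is correct once one parameter choice is repaired (below). Because you never ask for an equitable partition, atoms and unequal weights, which is exactly the paper's situation, cause no difficulty. The cleaning estimate $\|g-g\1_A\|_1\le 4\eta$ is right. For the sparse blocks the reason is $\int g=\int g_{\mathcal P}<\eta\mu(V_i)\mu(V_j)$ on each such block, not $g\le 1$. Reading your regularity condition as $|\int_{S\times T}(g-g_{\mathcal P})|\le\gamma\mu(V_i)\mu(V_j)$, the telescoping count shows that no $\phi$ can have all its edge blocks in $A$. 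So every summand of $t(F,g\1_A)$ vanishes exactly.

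The repair concerns the dependence of $\gamma$. In your own counting bound $\prod_u\mu(V_{\phi(u)})(\eta^e-e\gamma)$, the error is already scaled by $\prod_u\mu(V_{\phi(u)})$, so you only need $e\gamma\le\eta^e/2$. Take $\gamma=\eta^e/(2e)$, depending on $\eta$ and $F$ only.

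With this choice, the plain energy-increment iteration you mention works. Refine by witness sets $S,T$ of each irregular block; Cauchy--Schwarz gives an energy gain of at least $\gamma^2\mu(V_i)\mu(V_j)$ per irregular block, hence more than $\gamma^2\eta$ per round. The iteration therefore stops after at most $\gamma^{-2}\eta^{-1}$ rounds with $K\le K_0(\eta,F)$, and $\delta<(\eta/K_0)^v\eta^e/2$ finishes the proof.

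Requiring instead $\gamma\ll(\eta/K)^v$, with $K$ the output size, is circular for that iteration: the gain per round would shrink as $K$ grows, so termination is no longer guaranteed. That is what pushed you toward strong regularity, which is unnecessary here. As you parametrised it, that route would also fail. Markov applied to $\|g-g'\|_1\le\eta^2$, with exceptional measure $\eta$, only certifies blockwise regularity of order $2\eta$. This is far above the $\eta^e/(2e)$ the counting step needs; you would have to take the $L^1$ error much smaller than $\eta^{e+1}$.

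Finally, the bound $\prod_u\mu(V_{\phi(u)})\ge(\eta/K)^v$ requires every vertex of $F$ to lie on an edge. Either delete isolated vertices first, since they do not change $t(F,\cdot)$, or redirect them to a part of measure at least $1/K$.
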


\begin{proof}[Proof of Theorem~\ref{thm:pascadi}]
If $p=0$, hypothesis (ii) forces $G$ to be empty, so the conclusion is
immediate. We may therefore assume $p>0$. Put $m=2k+1$ and
$T=m(m+1)$. Let $0<\delta<1$ be sufficiently small in terms of
$\epsilon,L,C,k$, to be chosen later. Let $V=V(G)$ with its original
uniform probability measure, and let
\[
f=p^{-1}\1_G:V\times V\to[0,p^{-1}]
\]
be the normalized edge-indicator function of $G$. From hypothesis (i), $G$
has at most $(L+\epsilon)pn^2/2$ edges, and hence $\E f\le L+\epsilon$.

Apply Theorem~\ref{thm:weak-sparse-reg} to $f/L$ to obtain a partition
$\mathcal P$ of $V$ into at most $M=M(k,L,\epsilon,\delta)$ parts such that
\[
 \|(f-f_{\mathcal P})\1_{f_{\mathcal P}\le L}\|_\square
 \le L\delta^{2^{k+2}}.
\]
Let
\[
 \widetilde f=f\1_{f_{\mathcal P}\le L},\qquad
 \widetilde g=\widetilde f_{\mathcal P}
             =f_{\mathcal P}\1_{f_{\mathcal P}\le L}.
\]
Then
\[
 \|\widetilde f-\widetilde g\|_\square\le L\delta^{2^{k+2}}.
\]
By hypothesis (i), forming $\widetilde f$ deletes at most
$\epsilon pn^2/4$ edges between dense pairs.

Let $S$ be the union of the parts of $\mathcal P$ of size at most $T$,
and put $U=V\setminus S$. Delete all remaining edges incident to $S$.
Since every remaining block has edge density at most $Lp$, this deletes
at most
\[
 Lp|S|n\le TMLpn\le\epsilon pn^2/3
\]
edges, provided $n>n_0$ is sufficiently large. Keep all vertices of $S$
as isolated vertices in the original probability space $V$, and replace
$\widetilde f,\widetilde g$ by
$\widetilde f\1_{U\times U},\widetilde g\1_{U\times U}$, respectively.
Because $S$ is a union of whole parts, these new functions still satisfy
\[
 \widetilde g=\widetilde f_{\mathcal P},\qquad
 \|\widetilde f-\widetilde g\|_\square\le L\delta^{2^{k+2}}.
\]

Partition each part $V_i\subseteq U$ into $m$ parts of nearly equal size,
and partition each part contained in $S$ arbitrarily into $m$ parts,
allowing empty parts:
\[
 V_i=V_i^{(1)}\cup\cdots\cup V_i^{(m)}.
\]
For $V_i\subseteq U$ we have $|V_i|>m(m+1)$, and hence
\[
 |V_i^{(r)}|\ge\lfloor |V_i|/m\rfloor\ge |V_i|/(m+1)
 \qquad(r\in[m]).
\]
Let $V^{(r)}=\bigcup_{V_i\in\mathcal P}V_i^{(r)}$. For $r,s\in[m]$, write
\begin{equation}\label{eq:frs-grs}
 f_{rs}=L^{-1}\widetilde f\,\1_{V^{(r)}\times V^{(s)}},\qquad
 g_{rs}=L^{-1}\widetilde g\,\1_{V^{(r)}\times V^{(s)}}.
\end{equation}
All these functions use the original uniform probability space $V$ in
each variable. Restricting a kernel to a rectangle does not increase its
cut norm, so
\[
 \|f_{rs}-g_{rs}\|_\square\le\delta^{2^{k+2}}.
\]
By hypothesis (ii),
\[
 t(C_{2k},f)\le C^{2k}.
\]

\begin{claim}\label{claim:L2-path}
We have
\[
 \|f_{1,2}\circ\cdots\circ f_{k,k+1}\|_2^2
 \le L^{-2k}t(C_{2k},f).
\]
\end{claim}

\begin{proof}
By definition,
\[
 (f_{1,2}\circ\cdots\circ f_{k,k+1})(x_1,x_{k+1})
 =\E_{x_2,\ldots,x_k\in V}
       \prod_{i=1}^kf_{i,i+1}(x_i,x_{i+1}).
\]
This is the number of $k$-edge paths between the endpoints, appropriately
normalized. Squaring and introducing independent copies of the internal
variables gives a $C_{2k}$-density. Discarding the nonnegative indicator
factors therefore gives the asserted inequality.
\end{proof}

The same argument applies to every cyclic $k$-edge arc. Thus all the path
$L^2$ squares in Lemma~\ref{lem:counting-multipartite} are at most
\[
 D=\max\{1,(C/L)^{2k}\}.
\]
Apply that lemma with $\epsilon_{\rm count}=\delta^{2^{k+1}}$, whose square
is $\delta^{2^{k+2}}$. Its error is $9kD\delta$. From
\eqref{eq:frs-grs} we obtain
\begin{equation}\label{eq:count-compare}
\begin{aligned}
 &\E_{x_1,\ldots,x_m\in V}
    \prod_{r=1}^m\widetilde f(x_r,x_{r+1})\1_{V^{(r)}}(x_r)\\
 &\quad\ge
  \E_{x_1,\ldots,x_m\in V}
    \prod_{r=1}^m\widetilde g(x_r,x_{r+1})\1_{V^{(r)}}(x_r)
       -9kDL^m\delta\\
 &\quad\ge (m+1)^{-m}t(C_m,\widetilde g)-9kDL^m\delta.
\end{aligned}
\end{equation}
Here $x_{m+1}=x_1$. To justify the last inequality, expand the expectation
over tuples of original parts. The kernel $\widetilde g$ is constant on
each block. A tuple involving a part contained in $S$ contributes zero;
for every other tuple, each restricted vertex weight is at least
$1/(m+1)$ times its original part weight, by the partwise bound above.

The sets $V^{(1)},\ldots,V^{(m)}$ are disjoint, so every nonzero term on
the left of \eqref{eq:count-compare} represents an injective $C_m$ in a
subgraph of $G$. There are at most $2m$ embeddings per unlabelled cycle.
Hypothesis (iii) therefore bounds that side by $2m\delta$. Consequently,
\[
 t(C_m,\widetilde g)
 \le (m+1)^m(2m+9kDL^m)\delta
 =O_{k,L,C}(\delta).
\]

Apply Theorem~\ref{thm:weighted-removal} to the kernel
$\widetilde g/L\in[0,1]$, on the finite quotient probability space whose
points are the parts of $\mathcal P$, with weights $|V_i|/n$, and use
deletion tolerance $\epsilon/(3L)$. Choose $\delta$ small enough that its
density hypothesis holds. Pulling the resulting symmetric set back to
$V\times V$ gives a symmetric union $A$ of whole blocks with
\[
 \|\widetilde g-\widetilde g\1_A\|_1\le\epsilon/3,
 \qquad t(C_m,\widetilde g\1_A)=0.
\]
If an edge of $\widetilde f$ is positive, its block average is positive.
Since $A$ is a union of blocks, a $C_m$-homomorphism in
$\widetilde f\1_A$ would give one in $\widetilde g\1_A$. Hence
$t(C_m,\widetilde f\1_A)=0$. Also, block averaging gives
\[
 \|\widetilde f-\widetilde f\1_A\|_1
 =\|\widetilde g-\widetilde g\1_A\|_1\le\epsilon/3.
\]
The final removal therefore deletes at most $\epsilon pn^2/6$ undirected
edges. After fixing $\delta$ and $M$, increase $n_0$ so that the small-part
deletion estimate holds. In total we delete at most $\epsilon pn^2/4$
edges between dense pairs, at most $\epsilon pn^2/3$ incident to small
parts, and at most $\epsilon pn^2/3$ in the final step.
The remaining graph is $C_{2k+1}$-homomorphism-free, equivalently
$\{C_3,C_5,\ldots,C_{2k+1}\}$-free: every shorter odd cycle supports a
closed walk of length $2k+1$ by adding backtracking steps, and every odd
closed walk contains an odd cycle of no greater length. This proves the
conclusion.
\end{proof}

We restate Theorem~\ref{thm:C4-removal} in the following quantified form: let $p\ge n^{-1/2}$.
For every $\epsilon>0$ and $k\ge2$, there exist $n_0$ and $\delta>0$ such that, for every
$n\ge n_0$, every $n$-vertex graph with at most $\delta p^4n^4$ copies of $C_4$ and at most
$\delta p^{2k+1}n^{2k+1}$ copies of $C_{2k+1}$ can be made
$\{C_3,C_5,\ldots,C_{2k+1}\}$-free by deleting at most $\epsilon pn^2$ edges.

\begin{proof}[Proof of Theorem~\ref{thm:C4-removal}]
Let $K$ be an absolute constant such that the estimate in
Lemma~\ref{lem:dense-pair} is at most
$K(q^{-1}+M^2+M^{1/4}T^{1/4})n$. First choose $L\ge12K/\epsilon$.
When $\delta\le1$ and $p\ge n^{-1/2}$, the estimate in the proof of
Lemma~\ref{lem:hom-C4} gives
\[
 \homn(C_4,G)\le24N(C_4,G)+3n^2\le27p^4n^4.
\]
Consequently, by Lemma~\ref{lem:galvin-tetali},
\[
 \homn(C_{2k},G)\le\homn(C_4,G)^{k/2}
 \le27^{k/2}p^{2k}n^{2k}\le(3pn)^{2k}.
\]
Apply Theorem~\ref{thm:pascadi} with $\epsilon,L,C=3,k$ to obtain
$M,\delta_*,n_*$. Now choose
\[
 0<\delta\le\min\left\{1,\delta_*,
      \left(\frac{\epsilon}{12KM^{1/4}}\right)^4\right\},
 \qquad
 n_0\ge\max\left\{n_*+1,
      \left(\frac{12KM^2}{\epsilon}\right)^2\right\}.
\]
Let $G$ satisfy the hypotheses in the quantified statement above.
For every partition into at most $M$ parts, Lemma~\ref{lem:dense-pair}
with $q=Lp$ bounds its dense-pair edges by
\[
 K\left(\frac{n}{Lp}+M^2n+M^{1/4}\delta^{1/4}pn^2\right)
 \le\epsilon pn^2/4.
\]
The three terms are each at most $\epsilon pn^2/12$, using $p^2n\ge1$,
$pn\ge\sqrt n$, and the choices of $L,n_0,\delta$, respectively.
Thus conditions (i)--(iii) of Theorem~\ref{thm:pascadi} hold, and its
conclusion proves the result.
\end{proof}

\end{document}